\begin{document}

\newcommand{\norm}[1]{\left\Vert#1\right\Vert}
\newcommand{\abs}[1]{\left\vert#1\right\vert}
\newcommand{\set}[1]{\left\{#1\right\}}
\newcommand{\Real}{\mathbb{R}}
\newcommand{\RR}{\mathbb{R}^n}
\newcommand{\supp}{\operatorname{supp}}
\newcommand{\card}{\operatorname{card}}
\renewcommand{\L}{\mathcal{L}}
\renewcommand{\P}{\mathcal{P}}
\newcommand{\T}{\mathcal{T}}
\newcommand{\A}{\mathbb{A}}
\newcommand{\K}{\mathcal{K}}
\renewcommand{\S}{\mathcal{S}}
\newcommand{\blue}[1]{\textcolor{blue}{#1}}
\newcommand{\red}[1]{\textcolor{red}{#1}}
\newcommand{\Id}{\operatorname{I}}

\newtheorem{thm}{Theorem}[section]
\newtheorem{prop}[thm]{Proposition}
\newtheorem{cor}[thm]{Corollary}
\newtheorem{lem}[thm]{Lemma}
\newtheorem{lemma}[thm]{Lemma}
\newtheorem{exams}[thm]{Examples}
\theoremstyle{definition}
\newtheorem{defn}[thm]{Definition}
\newtheorem{rem}[thm]{Remark}

\numberwithin{equation}{section}

\title[Temperature  of Schr\"odinger operators with initial data in  Morrey spaces]
{Characterization of temperatures associated to Schr\"odinger operators with initial data in Morrey spaces}

 \author[Q. Huang and C. Zhang]{Qiang Huang\  and\ Chao Zhang}

 \address{Department of Mathematics \\ Zhejiang Normal University\\
         Jinhua 321004, PR China}
 \email{huangqiang0704@163.com}

 \address{School of Statistics and Mathematics \\
             Zhejiang Gongshang University \\
             Hangzhou 310018, PR China}
 \email{zaoyangzhangchao@163.com}

 \subjclass[2010]{42B35, 42B37, 35J10,  47F05}
\keywords{Dirichlet problem, heat equation, Schr\"odinger operators,  Morrey space, Carleson measure,
 reverse H\"older inequality.}

\begin{abstract}  Let $\L$ be a Schr\"odinger operator of the form $\L=-\Delta+V$ acting on $L^2(\mathbb R^n)$ where the nonnegative
potential $V$ belongs to  the reverse H\"older class    $B_q$ for some $q\ge n.$ Let $L^{p,\lambda}(\mathbb{R}^{n})$, $0\le \lambda<n$ denote
the Morrey space on $\mathbb{R}^{n}$.
In this paper, we will
show that a function  $f\in L^{2,\lambda}(\mathbb{R}^{n})$ is the trace of the solution of
${\mathbb L}u:=u_{t}+\L u=0,  u(x,0)= f(x),$
 where $u$ satisfies  a Carleson-type condition
\begin{eqnarray*}
 \sup_{x_B, r_B} r_B^{-\lambda}\int_0^{r_B^2}\int_{B(x_B, r_B)}  |\nabla   u(x,t)|^2 {dx dt }  \leq C <\infty.
\end{eqnarray*}
Conversely, this Carleson-type   condition characterizes  all the ${\mathbb L}$-carolic functions whose traces belong to
the Morrey space $L^{2,\lambda}(\mathbb{R}^{n})$ for all $0\le \lambda<n$.
This result extends the  analogous characterization found by Fabes and Neri in \cite{FN1}
for   the classical BMO space  of John and Nirenberg.

 \end{abstract}

\maketitle

\section{Introduction and statement of the main result}
\setcounter{equation}{0}

In Harmonic Analysis, to study a (suitable) function $f(x)$ on $\mathbb R^n$ is to consider a harmonic function
on $\mathbb R^{n+1}_{+}$ which has the boundary value as $f(x)$. A standard choice for such a harmonic
function is the Poisson
integral $e^{-t\sqrt{-\Delta}} f(x)$ and one recovers $f(x)$ when letting $t \rightarrow 0^{+}$, where $\Delta=\sum_{i=1}^n\partial_{x_i}^2$ is the Laplace operator.
 In other words,
one obtains $u(x,t) = e^{-t\sqrt{-\Delta}} f(x)$ as the solution of the equation
\begin{equation*}
\left\{
\begin{aligned}
\partial_{tt}u +\Delta u=0&,\ \ \ \  \ \ \ \  x\in \RR, \ t>0, \\
u(x,0)= f(x)&, \ \ \ \ \ \ \ \ x\in \RR.\\
\end{aligned}
\right.
\end{equation*}
This approach is intimately related to
the study of singular integrals. In \cite{SW}, the authors studied the classical case $f \in L^p(\mathbb R^n)$, $1 \le p \le \infty$.

It is well known that the BMO space, i.e. the space
of functions of bounded mean oscillation, is natural substitution to study singular integral at the end-point space $L^{ \infty}(\RR)$.
A celebrated   theorem of  Fefferman and Stein \cite{FS}   states    that
 a BMO function   is the trace of the solution of
 $\partial_{tt}u +\Delta u=0,  u(x,0)= f(x),$
 whenever $u$ satisfies
\begin{eqnarray*}\label{ee1.1}
 \sup_{x_B, r_B} r_B^{-n}\int_0^{r_B}\int_{B(x_B, r_B)}|t\nabla  u(x,t)|^2 {dx dt\over t }  \leq C<\infty,
\end{eqnarray*}
where $\nabla=(\nabla_x, \partial_t)=(\partial_{1},...,\partial_{n}, \partial_t).$  Conversely,
   Fabes, Johnson and Neri \cite{FJN} showed that  condition  above
   characterizes  all the harmonic functions whose traces are in ${\rm BMO}(\RR)$ in 1976.
  The study of this topic has been widely
extended to more general operators such as elliptic operators and Schr\"odinger operators (instead of the Laplacian),  for more general initial data spaces and for domains other than $\mathbb R^n$ such as Lipschitz domains. For these generalizations,   see \cite{DKP, DYZ, FN1, FN,  HMM, MSTZ2, Song}.

In \cite{FN1}, Fabes and Neri further generalized  the above characterization to caloric functions (temperature), that is the authors proved that a BMO function $f$ is the trace of the solution of

\begin{equation}\label{heateq}
\left\{
\begin{aligned}
\partial_{t}u-\Delta u=0&,\ \ \ \  \ \ \ \  x\in \RR, \ t>0, \\
u(x,0)= f(x)&, \ \ \ \ \ \ \ \ x\in \RR,\\
\end{aligned}
\right.
\end{equation}
 whenever $u$ satisfies
\begin{eqnarray} \label{e1.1}
 \sup_{x_B, r_B} r_B^{-n}\int_0^{r_B^2}\int_{B(x_B, r_B)}  |\nabla_x  u(x,t)|^2 {dx dt }  \leq C<\infty,
\end{eqnarray}
  and, conversely,  the  condition \eqref{e1.1}
   characterizes  all the carolic functions whose traces are in ${\rm BMO}(\RR)$.
The authors in \cite{JX} explored more informations, related to harmonic functions and carolic functions, about this subject.

In this paper, we consider a similar characterization in Moerry space $L^{p,\lambda}(\mathbb{R}^{n})$. It is known that $L^{p,0}(\mathbb{R}^{n})=L^{p}(\mathbb{R}^{n})$ and $L^{p,\lambda}(\mathbb{R}^{n})
=\mathcal{C}^{p,\lambda}(\mathbb{R}^{n})/\mathbb{C}$ for $0\leq\lambda<n$, where $\mathcal{C}^{p,\lambda}(\mathbb{R}^{n})$
denote the Campanato space. When $\lambda=n$, $\mathcal{C}^{p,n}(\mathbb{R}^{n})=\rm BMO(\mathbb{R}^{n})$.
So, Morrey spaces were proposed to be intermediate function spaces between $L^{p}$
space and BMO space. For more information about Morrey spaces, see \cite{YSY}.
The main aim of this article is to study a similar characterization  to \eqref{heateq}  for the Schr\"odinger operator
with some conditions on its potentials and boundary values in Morrey spaces.
To be precise, let us consider  the Schr\"odinger  operator
\begin{equation}\label{e1.2}
\L  =-\Delta  +V(x) \ \ \ {\rm on} \ \ L^2(\RR), \ \ \ \  n\geq 3.
\end{equation}
We assume that $V$ is a nonnegative  potential, not identically zero and that
 $V\in B_q$  for  some $q\geq n/2$,
which by definition means that   $V\in L^{q}_{\rm loc}(\RR), V\geq 0$, and
there exists a  constant $C>0$ such that   the reverse H\"older inequality
\begin{equation}\label{e1.3}
\left(\frac{1}{\abs{B}}\int_BV(y)^q~dy\right)^{1/q}\leq\frac{C}{\abs{B}}\int_BV(y)~dy,
\end{equation}
holds for all   balls $B$ in $\RR.$

The operator   $\L$ is a self-adjoint
operator on $L^2(\RR)$. Hence $\L$ generates the $\L$-heat semigroup $$\T_tf(x)=e^{-t{\L}}f(x)=\int_{\Real^n}\mathcal{H}_t(x,y)f(y)dy,\quad  f\in L^2(\RR),\ t>0.$$
From the Feynman-Kac formula, it is well-known that  the semigroup kernels ${\mathcal H}_t(x,y)$  of the operators $e^{-t{\L}}$ satisfies
\begin{eqnarray*}
0\leq {\mathcal H}_t(x,y)\leq h_t(x-y)
\end{eqnarray*}
for all $x,y\in\RR$ and $t>0$, where
$$
h_t(x)=(4\pi t)^{-\frac{n}{2}}e^{-\frac{|x|^2}{4t}}
$$
is the kernel of the classical heat semigroup
$\set{{T}_t}_{t>0}=\{e^{t\Delta}\}_{t>0}$ on $\Real^n$. For the classical heat semigroup associated with Laplacian, see \cite{St1970}.
In this article, we consider the parabolic Schr\"odinger differential operators
$${\mathbb L}=\partial_{t}+{\L} ,$$
$t>0, x\in\RR$;  see,
for instance, \cite{GJ,TH,YZ} and references therein.
For $f\in  L^p(\RR)$, $1\leq p<  \infty,$
it is well known that $u(x,t)=e^{-t{\L}}f(x), t>0, x\in\RR$, is a solution to the heat equation
\begin{eqnarray*}\label{el.4}
{\mathbb L}u=\partial_{t}u+{\L} u =0\ \ \ {\rm in }\ {\mathbb R}^{n+1}_+
\end{eqnarray*}
with the boundary data $f\in  L^p(\RR)$, $1\leq p<  \infty.$
 The equation  ${\mathbb L}u =0$ is interpreted in the weak sense via a sesquilinear form, that is,
  $u\in {W}^{1, 2}_{{\rm loc}} ( {\mathbb R}^{n+1}_+) $ is a weak solution of ${\mathbb L}u =0$   if it satisfies
$$\int_{{\mathbb R}^{n+1}_+}
{\nabla_x}u(x,t)\cdot {\nabla_x}\psi(x,t)\,dxdt-\int_{{\mathbb R}^{n+1}_+} u(x,t) \partial_t\psi(x,t)dxdt+
 \int_{{\mathbb R}^{n+1}_+} V u\psi \,dxdt=0,\ \ \ \ \forall \psi\in C_0^{1}({\mathbb R}^{n+1}_+).
 $$
 In the sequel,   we call such a function $u$  an ${\mathbb L}$-carolic function associated to the operator ${\mathbb L}$.

In \cite{YZ}, the authors proved that the conclusion gotten by E. Fabes and U. Neri in \cite{FN1} can be proved in the Schr\"odinger case. In \cite{Song}, the authors considered the results in \cite{FJN} in the case of Poisson integrals of
Schr\"odinger operators with Morrey traces. As mentioned above,  we  are interested in  deriving the characterization of the solution
 to the heat equation $ {\mathbb L}u  =0$ in ${\mathbb R}^{n+1}_+$ with boundary values in Morrey spaces.
 Recall that Morrey spaces were introduced in 1938 by C. Morrey\cite{Morrey}  to consider the regularity problems of
 solutions to PDEs. For every $1\leq p<\infty$ and $\lambda\in[0,n)$, the Morrey spaces $L^{p,\lambda}(\mathbb{R}^{n})$
 are defined as
 \begin{equation*}
 L^{p,\lambda}(\mathbb{R}^{n})=\left\{f\in L^{p}_{loc}(\mathbb{R}^{n}):\sup\limits_{x\in\mathbb{R}^{n},r>0}
 r^{-\lambda}\int_{B(x,r)}|f(y)|^{p}dy<\infty\right\}
 \end{equation*}
 This is a Banach space with respect to the norm
 \begin{equation*}
 \|f\|_{L^{p,\lambda}(\mathbb{R}^{n})}=\left(\sup\limits_{x\in\mathbb{R}^{n},r>0}
 r^{-\lambda}\int_{B(x,r)}|f(y)|^{p}dy\right)^{1/p}<\infty
 \end{equation*}
Moreover, for every $1\leq p<\infty$ and $\lambda>0$, the Campanato spaces $C^{P,\lambda}(\mathbb{R}^{n})$ are defined as
\begin{equation*}
C^{p,\lambda}(\mathbb{R}^{n})=\left\{f\in L_{loc}^{p}(\mathbb{R}^{n}):\|f\|_{C^{p,\lambda}(\mathbb{R}^{n})}<\infty\right\}
\end{equation*}
with the Campanato seminorm being given by
\begin{equation*}
\|f\|_{C^{p,\lambda}(\mathbb{R}^{n})}=\left(\sup\limits_{x\in\mathbb{R}^{n},r>0}r^{-\lambda}\int_{B(x,t)}|f(y)-f_{B(x,r)}|^{p}dy\right)^{1/p}<\infty,
\end{equation*}
where $f_{B(x,r)}$ denotes the average value of $f$ on the ball $B(x,r)$. And when $\lambda\in[0,n)$, $\mathcal{C}^{p,\lambda}(\mathbb{R}^{n})/\mathbb{C}=L^{p,\lambda}(\mathbb{R}^{n})$. Specially, when  $\lambda=0$,  $\mathcal{C}^{p,0}(\mathbb{R}^{n})/\mathbb{C}=L^{p,0}(\mathbb{R}^{n})=L^{p}(\mathbb{R}^{n}).$

Next, we introduce a new  function class on the upper half plane $\mathbb{R}_+^{n+1}$.
\begin{defn}
Suppose $V\in B_q$ for some $q\ge n$ and $0\le \lambda<n.$ We say that,  a $C^1$-functions $u(x,t)$ defined on $\Real_+^{n+1}$ belongs to the class  ${\rm TL_\L^\lambda }(\Real_+^{n+1})$, if  $u(x,t)$ is
the solution of ${\mathbb L}u=0$  in $\Real_+^{n+1} $ such that
\begin{eqnarray*}\label{e1.8}
\|u\|^2_{{\rm TL_\L^\lambda }(\Real_+^{n+1})}&=& \sup_{x_B, r_B}     r_B^{-\lambda} \int_0^{r_B^2}\int_{B(x_B, r_B)}  | \nabla u(x,t) |^2
{dx dt }  <\infty,
\end{eqnarray*}
where $\nabla=(\nabla_x, \partial_t).$
\end{defn}

 The following theorem is the main result of this article.

 \begin{thm}\label{th1.1}
 Suppose $V\in B_q$ for some $q\ge n$ and $0\le \lambda<n,$
then we have
 \begin{itemize}
\item[(1)] if $f\in L^{2,\lambda}(\mathbb{R}^{n})$, then  the function $u=e^{-t \L}f\in {\rm TL_\L^\lambda }(\Real_+^{n+1})$
 and
 $$
 \|u\|_{{\rm TL_\L^\lambda }(\Real_+^{n+1})}\leq C\|f\|_{L^{2,\lambda}(\mathbb{R}^{n})};$$

\item[(2)]    if $u\in {\rm TL_\L^\lambda }(\Real_+^{n+1})$, then    there exists some $f\in L^{2,\lambda}(\mathbb{R}^{n})$ such that $u=e^{-t \L}f$,
and
$$
\|f\|_{L^{2,\lambda}(\mathbb{R}^{n})}\leq C\|u\|_{{\rm TL_\L^\lambda }(\Real_+^{n+1})}
$$
with some constant $C>0$ independent of $u$ and $f$.

 \end{itemize}
\end{thm}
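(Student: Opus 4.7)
The plan is to treat the two directions separately, following the template of Fabes--Neri but adapted to the Morrey setting where the $r_B^{-n}$ normalization is replaced by $r_B^{-\lambda}$.

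For part (1), I fix a ball $B=B(x_B,r_B)$ and decompose $f=f_1+f_2$ with $f_1=f\chi_{2B}$, $f_2=f\chi_{(2B)^c}$, then set $u_i=e^{-t\L}f_i$. For the local piece $u_1$, I would use the spectral identity
\begin{equation*}
\int_0^\infty \big(\|\nabla e^{-t\L}g\|_{L^2(\RR)}^2 + \|\sqrt{V}\,e^{-t\L}g\|_{L^2(\RR)}^2\big)\,dt \le C\|g\|_{L^2(\RR)}^2,
\end{equation*}
which follows from self-adjointness and positivity of $\L$. Applied to $g=f_1$ this yields
\begin{equation*}
\int_0^{r_B^2}\!\!\int_{B}|\nabla u_1|^2\,dxdt \le C\|f_1\|_{L^2}^2 \le C(2r_B)^\lambda\|f\|_{L^{2,\lambda}}^2.
\end{equation*}
For the far piece $u_2$, I would invoke Gaussian bounds on $\nabla_x\mathcal{H}_t(x,y)$ and $\partial_t\mathcal{H}_t(x,y)$ (which are available under $V\in B_q$, $q\ge n$), decompose $(2B)^c$ into dyadic annuli $2^{k+1}B\setminus 2^k B$, and use the bound $|x-y|\gtrsim 2^k r_B$ together with Cauchy--Schwarz. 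The exponential decay $e^{-c4^k r_B^2/t}$ absorbs the time integration $\int_0^{r_B^2}$, producing a convergent geometric sum whose total is controlled by $r_B^\lambda \|f\|_{L^{2,\lambda}}^2$.

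For part (2), the strategy is to recover a boundary trace via weak compactness. Set $f_\epsilon(x)=u(x,\epsilon)$. The crucial first step is to establish a uniform bound $\|f_\epsilon\|_{L^{2,\lambda}(\RR)}\le C\|u\|_{{\rm TL_\L^\lambda}}$. For this I would combine a Caccioppoli-type identity obtained by integrating $\partial_t(\chi^2|u|^2)+2\chi^2(|\nabla u|^2+V|u|^2)=|u|^2\Delta\chi^2+ (\text{cross terms})$ against a cutoff $\chi$ adapted to $B$, which gives a pointwise-in-time estimate of $\int_B|u(x,\epsilon)|^2 dx$ in terms of $\int_0^{r_B^2}\!\int_{2B}|\nabla u|^2+ \int_{2B}|u(x,r_B^2)|^2 dx$, plus tail terms. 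The first summand is $\le C r_B^\lambda\|u\|_{{\rm TL_\L^\lambda}}^2$ directly, while the second is controlled by a mean-value inequality for $\L$-caloric functions on parabolic cubes and a bootstrapping argument from the Carleson condition at scale $r_B$. Once the family $\{f_\epsilon\}$ is bounded in $L^{2,\lambda}$, Banach--Alaoglu (using that $L^{2,\lambda}$ is the dual of a block-type predual) produces a weak-$*$ subsequential limit $f\in L^{2,\lambda}$ with $\|f\|_{L^{2,\lambda}}\le C\|u\|_{{\rm TL_\L^\lambda}}$ by lower semicontinuity. Uniqueness of the Cauchy problem for $\mathbb L u=0$ with $L^2_{\mathrm{loc}}$ data gives $u(x,t)=e^{-(t-\epsilon)\L}f_\epsilon(x)$ for $t>\epsilon$; letting $\epsilon\to 0^+$ and using the semigroup continuity identifies $u=e^{-t\L}f$.

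The main obstacle is the uniform-in-$\epsilon$ Morrey control of $f_\epsilon$ in part (2). Energy estimates alone only give a bound on $\nabla u$, not on $u$ itself, so one must find the right cutoff argument that transfers the gradient bound (averaged in $t$) into a pointwise-in-$t$ bound on $u(\cdot,\epsilon)$ at Morrey scale. The standard BMO proof of Fabes--Neri handles this by a clever use of Poincaré's inequality on parabolic cylinders combined with the semigroup representation; here the same machinery should work once one is careful that all constants rescale in $r_B^\lambda$ rather than $r_B^n$, and that the potential term $V|u|^2$ from the Schr\"odinger operator only helps (has the right sign) and can be dropped.
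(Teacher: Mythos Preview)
Your argument for part~(1) is essentially the paper's: the same near/far decomposition, the same spectral identity for the local piece, and the same Gaussian gradient bounds plus dyadic annuli for the tail. No issues there.

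Part~(2) has a real gap. When you integrate the Caccioppoli identity from $t=\epsilon$ up to $t=r_B^2$ to isolate $\int_B|u(\cdot,\epsilon)|^2$, the identity reads
\[
\int\chi^2|u(\cdot,\epsilon)|^2 \;=\; \int\chi^2|u(\cdot,r_B^2)|^2 \;+\; 2\!\int_\epsilon^{r_B^2}\!\!\int\chi^2\bigl(|\nabla u|^2+V|u|^2\bigr) \;+\; 4\!\int_\epsilon^{r_B^2}\!\!\int \chi u\,\nabla\chi\!\cdot\!\nabla u,
\]
so the potential term $V|u|^2$ lands on the \emph{right} with a \emph{positive} sign---it does not help, contrary to your last paragraph. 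Moreover, after Cauchy--Schwarz the cross term throws off a contribution $\int_\epsilon^{r_B^2}\!\int|\nabla\chi|^2|u|^2\sim r_B^{-2}\int_\epsilon^{r_B^2}\!\int_{2B\setminus B}|u|^2$, which is again a quantity of the same unknown type you are trying to bound. Neither the Carleson control of $\nabla u$ nor a parabolic mean-value inequality at the single time $t=r_B^2$ closes this loop uniformly in $\epsilon$; you would need an a~priori bound on $\int_0^{r_B^2}\!\int_{2B}(V+r_B^{-2})|u|^2$, and $V\in B_q$ does not furnish one without already knowing the Morrey bound on $u$.

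The paper avoids this circularity entirely by working in duality. It first shows (Lemma~3.4) that the $(2,\lambda)$-Carleson norm of $|\partial_t e^{-t\L}f_k|^2\,dxdt$ is controlled by $\|u\|_{{\rm TL}_\L^\lambda}^2$, then uses the equivalent characterization of $L^{2,\lambda}$ via $\sup_B r_B^{-\lambda/2}\|f-e^{-r_B^2\L}f\|_{L^2(B)}$. Pairing $(\mathcal I-e^{-r_B^2\L})f_k$ against a test function $g\in L^2(B)$ and invoking the reproducing formula $\int_0^\infty (t\partial_t e^{-t\L})^2\,\frac{dt}{t}=\frac14\,\mathcal I$ converts the pairing into $\iint F\cdot G\,\frac{dxdt}{t}$ with $F=t\partial_t e^{-t\L}f_k$ and $G=t\partial_t e^{-t\L}(\mathcal I-e^{-r_B^2\L})g$; a Carleson-measure/square-function estimate (Lemma~3.6) then yields the uniform bound $\|f_k\|_{L^{2,\lambda}}\le C\|u\|_{{\rm TL}_\L^\lambda}$. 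This route never needs to control $V|u|^2$ with the wrong sign.
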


We should mention that for the Schr\"odinger operator $\L$ in \eqref{e1.2},
an important property of the $B_q$ class, proved in \cite[Lemma 3]{Ge}, assures that the condition $V\in B_q$ also implies $V\in B_{q+\epsilon}$
for some $\epsilon>0$
 and that the $B_{q+\epsilon}$ constant of $V$ is controlled in terms of the one of $B_q$ membership. This in particular implies
 $V\in L^q_{\rm loc}(\RR)$ for some $q$ strictly greater than $n/2.$ However,  in general the potential $V$ can be unbounded and does not
 belong to $L^p(\RR)$ for any $1 \le p \le \infty .$  As a model example, we could take $V(x)=|x|^2$.
 Moreover, as
  noted in \cite{Shen}, if $V$ is any nonnegative
 polynomial, then $V$ satisfies the stronger condition
 \begin{eqnarray*}
\max_{x\in B} V(x)\leq\frac{C}{\abs{B}}\int_BV(y)~dy,
\end{eqnarray*}
which implies $V\in B_q$ for every $q\in (1, \infty)$ with a uniform constant.

This article is organized as follows. In Section 2, we recall some preliminary results including
the  kernel estimates of the heat semigroup related with $\mathcal{L},$  
 and prove some lemmas and certain properties of ${\mathbb L}$-carolic functions.
In  Section 3, we will prove our main result,   Theorem~\ref{th1.1}.

Throughout the article, the letters ``$c$ " and ``$C$ " will  denote (possibly different) constants
which are independent of the essential variables.

\vskip 1cm

\section{Basic properties of the heat  semigroups of Schr\"odinger operators}
\setcounter{equation}{0}


In this section, we begin by recalling some basic properties of the nonnegative  potential $V$ under the assumption \eqref{e1.3} and
the  kernel estimates of the heat semigroup related with $\mathcal{L}$.

It follows from Lemmas 1.2 and 1.8 in \cite{Shen}  that there is a constant $C_0$ such that
for a nonnegative Schwartz class function $\varphi$ there exists a constant $C$ such that
\begin{equation*}\label{e2.2}
\int_{\RR} \varphi_t(x-y)V(y)dy\leq
\left\{
\begin{array}{lll}
Ct^{-1}\left({\sqrt{t}\over \rho(x)}\right)^{\delta}\ \ \ &{\rm for}\ t\leq \rho(x)^2,\\
C\left({\sqrt{t}\over \rho(x)}\right)^{C_0+2-n}\ \ \ &{\rm for}\ t> \rho(x)^2,
\end{array}
\right.
\end{equation*}
where $\varphi_t(x)=t^{-n/2}\varphi(x/\sqrt{t}),$
$
\delta =2-\frac{n}{q}>0,
$ and the critical radii function $\rho(x; V)=\rho(x)$ above are determined by the function
\begin{equation*}\label{e1.7}
 \rho(x)=\sup \Big{\{} r>0: \ {1\over r^{n-2}} \int_{B(x, r)} V(y)dy \leq 1 \Big{\}}.
\end{equation*}

For the heat kernel ${\mathcal H}_t(x,y)$
of the semigroup
$e^{-t\L}$, we have the following estimates.

\begin{lem}[see \cite{DGMTZ}] \label{le2.2} Suppose $V\in B_q$ for some $q> n/2.$
For every $N>0$, there exists a constant $C_N$ such that for every $ x,y\in\Real^n, t >0$,
 \begin{itemize}
\item[(i)]
\begin{equation*}
0\leq {\mathcal H}_t(x,y)\leq C_N t^{-{n\over 2}}e^{-\frac{\abs{x-y}^2}{ct}}\left(1+\frac{\sqrt{t}}{\rho(x)}
+\frac{\sqrt{t}}{\rho(y)}\right)^{-N} \ {\rm and}
\end{equation*}

\item[(ii)] 
\begin{equation*}
 \abs{\partial_t{\mathcal H}_t(x,y)}\leq C_N t^{-\frac{n+2}{2}}e^{- \frac{\abs{x-y}^2}{ct}}
 \left(1+\frac{\sqrt t}{\rho(x)}+\frac{\sqrt t}{\rho(y)}\right)^{-N} .
\end{equation*}
 \end{itemize}
\end{lem}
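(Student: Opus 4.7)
The Feynman--Kac formula, already invoked in the excerpt, supplies the plain Gaussian bound $0\le \mathcal{H}_t(x,y)\le h_t(x-y)$, so the substance of~(i) is the additional decay factor $(1+\sqrt{t}/\rho(x)+\sqrt{t}/\rho(y))^{-N}$. My plan is to proceed in three stages: first produce an on-diagonal bound with the $\rho$-decay, then lift it to the off-diagonal Gaussian bound via a Davies--Gaffney argument, and finally deduce the time-derivative estimate~(ii) from analyticity of the semigroup.

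For the first stage I would use the Duhamel identity
\begin{equation*}
\mathcal{H}_t(x,y)=h_t(x-y)-\int_0^t\!\!\int_{\RR}h_{t-s}(x-z)\,V(z)\,\mathcal{H}_s(z,y)\,dz\,ds
\end{equation*}
together with the two-regime potential estimate recalled in Section~2, where $\delta=2-n/q>0$. The positivity $\mathcal{H}_t\ge 0$ forces the remainder integral to be dominated by $h_t(x-y)$. Substituting the coarse bound $\mathcal{H}_s\le h_s$ into that remainder and invoking $\int h_s(x-z)V(z)\,dz\le C s^{-1}(\sqrt{s}/\rho(x))^\delta$ for $s\le \rho(x)^2$, combined with the convolution semigroup identity $\int h_{t-s}(x-z)h_s(z-y)\,dz=h_t(x-y)$, produces a first factor $(\sqrt{t}/\rho(x))^{\delta}$; iterating this substitution $k$ times yields $(\sqrt{t}/\rho(x))^{k\delta}$ on the diagonal, so any prescribed $N$ is reached by taking $k$ large. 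For $t>\rho(x)^2$ one anchors at the threshold and bootstraps via the semigroup identity $\mathcal{H}_{2t}(x,y)=\int\mathcal{H}_t(x,z)\mathcal{H}_t(z,y)\,dz$ together with the second-regime potential bound, driving the decay to arbitrary order.

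To upgrade the on-diagonal bound to the full Gaussian statement in~(i), I would apply the Davies--Gaffney weighted estimate $\|e^{\psi}e^{-t\mathcal{L}}e^{-\psi}f\|_{L^2}\le e^{Ct\|\nabla\psi\|_\infty^2}\|f\|_{L^2}$, valid for every bounded Lipschitz $\psi$ and immediate from the sesquilinear form of $\mathcal{L}$ since $V\ge 0$, and combine it with the on-diagonal bound through the standard Dunford--Pettis/composition trick choosing $\psi$ essentially proportional to $|x-y|/\sqrt{t}$. This restores the factor $e^{-|x-y|^2/(ct)}$; by the symmetry $\mathcal{H}_t(x,y)=\mathcal{H}_t(y,x)$ and one further application of the semigroup identity, the $\rho$-decay can be placed simultaneously at both endpoints. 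For~(ii), self-adjointness and nonnegativity of $\mathcal{L}$ allow $e^{-z\mathcal{L}}$ to extend holomorphically to $\{\operatorname{Re}z>0\}$, and the preceding argument applies on each sector $|\arg z|\le\theta<\pi/2$ with a slightly larger constant in the Gaussian exponent. Cauchy's formula
\begin{equation*}
\partial_t\mathcal{H}_t(x,y)=\frac{1}{2\pi i}\oint_{|z-t|=t/2}\frac{\mathcal{H}_z(x,y)}{(z-t)^2}\,dz
\end{equation*}
then yields~(ii) with the extra $t^{-1}$ factor after absorbing constants into $c$.

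The main obstacle is the first stage: extracting genuine decay from the Duhamel iteration rather than a perturbation series that merely blows up relies crucially on the nonnegativity $h_t-\mathcal{H}_t\ge 0$, and gluing the two regimes of the potential bound across the threshold $t\sim\rho(x)^2$ so that a single factor $(1+\sqrt{t}/\rho(x)+\sqrt{t}/\rho(y))^{-N}$ emerges uniformly in $t$ and for arbitrary $N$ is the technical core of the argument.
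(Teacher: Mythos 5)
First, a point of reference: the paper does not prove Lemma~\ref{le2.2} at all --- it is quoted from \cite{DGMTZ} (the estimates go back to Kurata and Dziuba\'nski--Zienkiewicz) --- so your attempt must be measured against that literature. Measured that way, the decisive step of your plan fails. In the Duhamel identity $\mathcal{H}_t(x,y)=h_t(x-y)-\int_0^t\int h_{t-s}(x-z)V(z)\mathcal{H}_s(z,y)\,dz\,ds$ the correction term is nonnegative, so inserting the coarse bound $\mathcal{H}_s\le h_s$ together with Shen's estimate $\int h_s(x-z)V(z)\,dz\lesssim s^{-1}(\sqrt{s}/\rho(x))^{\delta}$ bounds the correction from \emph{above}, i.e.\ it bounds $h_t-\mathcal{H}_t$ from above and hence $\mathcal{H}_t$ from \emph{below}: $\mathcal{H}_t(x,y)\ge h_t(x-y)-C(\sqrt{t}/\rho(x))^{\delta}h_{ct}(x-y)$. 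That is the useful statement in the regime $t\lesssim\rho(x)^2$, where the factor $(1+\sqrt{t}/\rho(x)+\sqrt{t}/\rho(y))^{-N}$ is comparable to $1$ anyway; it produces no decay of $\mathcal{H}_t$ at all, and iterating the substitution only refines the lower bound (iterating the perturbation series blindly with upper bounds instead generates terms like $C^{k}(\sqrt{t}/\rho(x))^{k\delta}/k!$, whose sum is an upper bound that \emph{grows} in $t$). Likewise the semigroup identity alone cannot ``drive the decay to arbitrary order'': plugging $\mathcal{H}_t\le h_t$ into $\mathcal{H}_{2t}(x,y)=\int\mathcal{H}_t(x,z)\mathcal{H}_t(z,y)\,dz$ just returns the free Gaussian bound. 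So the ``first factor $(\sqrt{t}/\rho(x))^{\delta}$'' on which your whole scheme rests is never actually produced; the inequality points the wrong way, exactly at the spot you yourself identify as the technical core.

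What the known proof does instead is establish a quantitative loss of mass, $e^{-t\mathcal{L}}1(x)\le 1-c$ for $t\ge\rho(x)^2$, which amounts to a \emph{lower} bound on the Duhamel correction. This needs three ingredients your sketch omits: a near-diagonal lower bound on $\mathcal{H}_s$ for small $s$ (coming from the displayed lower bound above), the defining property of $\rho$ in the form $t^{-(n-2)/2}\int_{B(x,\sqrt{t})}V(y)\,dy\ge 1$ when $\sqrt{t}\ge\rho(x)$, and Shen's comparison estimates for $\rho$ ($\rho(z)\sim\rho(x)$ for $|z-x|\lesssim\rho(x)$, with controlled polynomial growth in general). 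The arbitrary power $N$ is then obtained by iterating the mass loss over dyadic times through $e^{-2t\mathcal{L}}1(x)=\int\mathcal{H}_t(x,z)\,e^{-t\mathcal{L}}1(z)\,dz$, splitting according to whether $\rho(z)^2\le t$; the Gaussian factor is restored simply by taking a geometric mean with $\mathcal{H}_t\le h_t$ (your Davies--Gaffney step is not wrong, just unnecessary), and symmetry of the kernel plus one more semigroup splitting places the decay at both $\rho(x)$ and $\rho(y)$. Your third stage --- holomorphic extension of $e^{-z\mathcal{L}}$ and Cauchy's formula to get (ii) with the extra $t^{-1}$ --- is indeed the standard route and is fine in outline, but it inherits the gap: without a genuine decay mechanism in stage one, the proposal proves nothing beyond $0\le\mathcal{H}_t\le h_t$.
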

In fact, with the same computation as in the proof of \cite[Proposition 4]{DGMTZ}, we have
\begin{equation}\label{eq1}
 \abs{t^m\partial_t^m{\mathcal H}_t(x,y)}\leq C_N t^{-\frac{n}{2}}e^{- \frac{\abs{x-y}^2}{ct}}
 \left(1+\frac{\sqrt t}{\rho(x)}+\frac{\sqrt t}{\rho(y)}\right)^{-N} .
\end{equation}

\begin{lem}\label{le3.8}\cite[Lemma 3.8]{DYZ}
 Suppose $V\in B_q$ for some $q>n.$ Let $\displaystyle \beta=1-{n\over q}$.
  For every $N>0$, there exist    constants  $C=C_{N}>0$ and $c>0$ such that
  for all $x,y\in\RR$ and $t>0,$ the $\mathcal{L}$-Heat semigroup kernels ${\mathcal H}_t(x,y)$,   associated to $e^{-t{\L}}$,
   satisfy the following estimates:
   \begin{itemize}

\item[(i)]
\begin{eqnarray}\label{e3.11}
 | \nabla_x {\mathcal H}_t(x,y)| + | {t} \nabla_x \partial_t{\mathcal H}_t(x,y)|
 \leq C t^{-(n+1)/2}e^{-\frac{\abs{x-y}^2}{ct}}\left(1+\frac{\sqrt{t}}{\rho(x)}
+\frac{\sqrt{t}}{\rho(y)}\right)^{-N},
\end{eqnarray}
 \item[(ii)] for $|h|<|x-y|/4,$
\begin{eqnarray*}\label{e3.12}
 | \nabla_x{\mathcal H}_t(x+h,y)- \nabla_x{\mathcal H}_t(x,y)|
 \leq C\left({|h|\over \sqrt{t}}\right)^{\beta}
t^{-(n+1)/2}e^{-\frac{\abs{x-y}^2}{ct}};
\end{eqnarray*}
\item[(iii)]  there is some $\delta>1$ such that
\begin{eqnarray*}\label{e3.13}
 \big|\sqrt t \nabla_x e^{-t{\L}}(1)(x) \big|\le C \min \left\{ \left(\frac{\sqrt t}{\rho(x)}\right)^\delta , \left(\frac{\sqrt t}{\rho(x)}\right)^{-N}\right\}.
\end{eqnarray*}
\end{itemize}
\end{lem}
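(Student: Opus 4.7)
The plan is to derive all three estimates from the Duhamel perturbation identity
$$\mathcal{H}_t(x,y) \;=\; h_t(x-y)\;-\;\int_0^t\!\!\int_{\RR} h_{t-s}(x-z)\,V(z)\,\mathcal{H}_s(z,y)\,dz\,ds,$$
combined with (a) classical Gaussian bounds on $\nabla_x^{\alpha}\partial_t^{k} h_t$, (b) the upper kernel bounds of Lemma \ref{le2.2} and \eqref{eq1}, and (c) the Shen-type integral bounds on $\int\varphi_t(x-y)V(y)\,dy$ recorded just before Lemma \ref{le2.2}. The sharpening of the hypothesis from $q>n/2$ to $q>n$ is essential: it gives $\beta=1-n/q>0$ for the H\"older exponent in (ii), and after passing to $q+\varepsilon$ via the self-improvement of $B_q$ it also gives $\delta=2-n/q>1$, which is what makes the singular time integral in (iii) converge.

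For part (i), I would differentiate the Duhamel identity in $x$. The main term $\nabla_x h_t(x-y)$ has the required Gaussian profile; for the correction, I split the $s$-integral at $t/2$. On $[0,t/2]$ the derivative stays on $h_{t-s}$ (since $t-s\sim t$), giving $|\nabla h_{t-s}(x-z)|\le C t^{-(n+1)/2}e^{-|x-z|^2/(ct)}$; Lemma \ref{le2.2}(i) dominates $\mathcal{H}_s(z,y)$ to produce the $\rho(y)$-decay, while the Shen bound applied to $\int h_{t-s}(x-z)V(z)\,dz$ produces the $\rho(x)$-decay. On $[t/2,t]$ I would invoke the semigroup composition $\mathcal{H}_t=\mathcal{H}_{t/2}\ast_z\mathcal{H}_{t/2}$ to transfer the $x$-derivative onto a $\nabla_x\mathcal{H}_{t/2}$ factor, closing the argument by a standard bootstrap in $t$. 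The bound on $t\,\nabla_x\partial_t\mathcal{H}_t$ is handled identically, using \eqref{eq1} in place of Lemma \ref{le2.2}(i).

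For part (ii), the classical input is the H\"older bound $|\nabla h_t(x+h-y)-\nabla h_t(x-y)|\le C(|h|/\sqrt t)^{\beta}\,t^{-(n+1)/2}e^{-|x-y|^2/(ct)}$ whenever $|h|<|x-y|/4$; this follows by interpolating between the gradient and second-gradient Gaussian bounds via the mean value theorem. Feeding this into the Duhamel expression and controlling the $V$-integral by the Shen bound yields the stated estimate; no $\rho$-factor is claimed on the right-hand side, so the bookkeeping is markedly simpler than in (i).

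For part (iii), since $e^{t\Delta}1\equiv 1$, Duhamel gives $1-e^{-t\L}(1)(x) = \int_0^t\!\!\int_{\RR} h_{t-s}(x-z)V(z)\,e^{-s\L}(1)(z)\,dz\,ds$; differentiating in $x$ and using the sub-Markov bound $0\le e^{-s\L}(1)\le 1$ reduces the estimate to controlling $\int_0^t\!\!\int_{\RR} |\nabla_x h_{t-s}(x-z)|V(z)\,dz\,ds$. In the regime $\sqrt t\le\rho(x)$ the sub-critical branch of the Shen bound produces an integrand of size $(t-s)^{-3/2}(\sqrt{t-s}/\rho(x))^{\delta}$; since $\delta>1$ the $s$-integral converges to a multiple of $t^{(\delta-1)/2}/\rho(x)^{\delta}$, and multiplying by $\sqrt t$ yields the desired $(\sqrt t/\rho(x))^{\delta}$. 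The companion bound $(\sqrt t/\rho(x))^{-N}$ in the regime $\sqrt t>\rho(x)$ comes from switching to the super-critical branch of the Shen bound and iterating Lemma \ref{le2.2}(i) to transfer the $\rho$-decay of $e^{-s\L}(1)$ itself. The main obstacle is precisely this passage through the critical time $s\approx\rho(x)^2$ in (iii) while keeping the singular $(t-s)^{-3/2}$ factor integrable; this is exactly where the strengthened assumption $q>n$ and the self-improvement of $B_q$ enter.
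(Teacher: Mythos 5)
First, a remark on the comparison itself: the paper does not prove this lemma at all --- it is quoted verbatim from \cite[Lemma 3.8]{DYZ} (which in turn rests on Shen/Dziuba\'nski--Garrig\'os--Mart\'inez--Torrea--Zienkiewicz type kernel estimates), so there is no internal argument to measure your sketch against. Your Duhamel-based plan is indeed the standard route to such bounds, and you correctly locate where $q>n$ enters: $\delta=2-n/q>1$ makes the singular time integrals converge and $\beta=\delta-1=1-n/q>0$ is the H\"older exponent. However, as written the sketch has two genuine gaps.

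In part (i), the factor $\left(1+\frac{\sqrt t}{\rho(x)}+\frac{\sqrt t}{\rho(y)}\right)^{-N}$ for \emph{arbitrary} $N$ cannot be produced by estimating the Duhamel formula term by term: the leading term $\nabla h_t(x-y)$ carries no $\rho$-decay whatsoever; for $s\le t/2$ Lemma \ref{le2.2} only yields $(1+\sqrt s/\rho(y))^{-N}$, which is useless for small $s$; and the Shen bound on $\int h_{t-s}(x-z)V(z)\,dz$ gives a fixed polynomial factor in $\sqrt t/\rho(x)$ (growth, not decay), never an order-$N$ gain. The decay must instead come from the regime split $\sqrt t\lessgtr\rho(x)$, the semigroup composition $\nabla_x\mathcal H_t(x,y)=\int\nabla_x\mathcal H_{\tau}(x,z)\mathcal H_{t-\tau}(z,y)\,dz$ at the critical scale $\tau\sim\min(t,\rho(x)^2)$, Lemma \ref{le2.2} with large $N$ for the second factor, and the standard comparability estimates for $\rho$ (e.g.\ $\rho(z)\sim\rho(x)$ when $|x-z|\lesssim\rho(x)$); none of this is in your outline, and your ``transfer the derivative onto $\nabla_x\mathcal H_{t/2}$ and bootstrap in $t$'' is circular as stated, since the transferred factor is exactly the unknown quantity. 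In part (ii), the step ``feed the exponent-$\beta$ H\"older bound for $\nabla h_{t-s}$ into Duhamel'' fails as described: the resulting time integrand near $s=t$ is of size $(t-s)^{-\beta/2-1/2}\cdot(t-s)^{-1+\delta/2}=(t-s)^{-1}$ (because $\beta=\delta-1$), so the integral diverges logarithmically --- the bookkeeping is not ``markedly simpler'' than in (i). The correct repair is to split at $t-s=|h|^2$: for $t-s\le|h|^2$ add the two plain gradient bounds (exponent $\delta/2-3/2>-1$, giving $|h|^{\delta-1}=|h|^\beta$, exactly where $\delta>1$ is used), and for $t-s\ge|h|^2$ use the full Lipschitz bound $|h|/\sqrt{t-s}$ on the Gaussian (exponent $\delta/2-2<-1$, again giving $|h|^{\delta-1}$); the large-time regime $\sqrt t>\rho(x)$ then needs the same composition/comparability machinery as (i). Part (iii) is essentially right in the subcritical regime; in the supercritical regime the cleanest closure is to write $\nabla_xe^{-t\L}1(x)=\int\nabla_x\mathcal H_{t/2}(x,z)\,e^{-(t/2)\L}1(z)\,dz$ and invoke (i) at time $t/2$ together with $0\le e^{-(t/2)\L}1\le1$, rather than the vaguer ``iterating Lemma \ref{le2.2}''.
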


\smallskip

We now recall a local behavior of solutions to $\partial_{t}u+{\L} u =0$, which was proved in \cite[Lemma 3.3]{WY}, see it also in \cite[Lemma 3.2]{GJ}.
We define parabolic cubes of center $(x,t)$ and radius $r$ by $B_{r}(x,t):=\{(y,s)\in {\RR}\times\Real_+:\abs{y-x}<r,\, t-r^2<s\le t\}=B(x,r)\times(t-r^2,t]$.
And for every $(x,t), (y,s)\in \RR\times (0, \infty)$, we define the parabolic metric: $\abs{(x,t)-(y,s)}=\max\{\abs{x-y}, \abs{s-t}^{1/2}\}$.

\begin{lemma}\cite[Lemma 3.3]{WY}\label{le2.6} Suppose $0\leq V\in L^q_{\rm loc}(\RR)$ for some $q> n/2.$
  Let $u$ be a weak solution of ${\mathbb L}u=0$
in the parabolic cube $B_{r_0}(x_0,t_0)$.
Then there exists a constant $C=C_n>0$ such that

\begin{eqnarray*}
\sup_{B_{r_0/4}(x_0,t_0)}| u(x,t)| \leq C\Big({1\over r_0^{n+2}}
\int_{B_{r_0/2}(x_0,\, t_0)}| u(x,t)|^2dxdt\Big)^{1/2}.
\end{eqnarray*}
\end{lemma}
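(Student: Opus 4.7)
The plan is to carry out a parabolic Moser iteration on powers of $|u|$, using the hypothesis $V\ge 0$ in the decisive way: in every Caccioppoli estimate the nonnegative term $\iint V|u|^p\eta^2$ can simply be discarded, so the argument proceeds exactly as for the pure heat equation, with no quantitative dependence on $V$. The $L^q_{\rm loc}$ assumption on $V$ plays no role in the Moser step itself; it enters only indirectly, through general regularity theory telling us that a weak solution $u$ is locally in $W^{1,2}$, so that the test functions below are legitimate.

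First, I would establish a parabolic Caccioppoli energy inequality. Fix $r_0/4\le r_1<r_2\le r_0/2$ and pick a parabolic cutoff $\eta\in C_c^\infty(\mathbb{R}^{n+1})$ with $\eta\equiv 1$ on $B_{r_1}(x_0,t_0)$, $\operatorname{supp}\eta\subset B_{r_2}(x_0,t_0)$, $\eta(\cdot,t_0-r_2^2)\equiv 0$, and $|\nabla_x\eta|+|\partial_t\eta|^{1/2}\lesssim (r_2-r_1)^{-1}$. Inserting $u\eta^2$ into the weak formulation of $\mathbb{L}u=0$ (with Steklov averaging in $t$ to justify the $\partial_t u$ term), discarding $\iint Vu^2\eta^2\ge 0$, and absorbing the cross term by Young's inequality, I would obtain
\begin{equation*}
\sup_{t_0-r_1^2\le t\le t_0}\int_{B(x_0,r_1)} u(x,t)^2\,dx+\iint_{B_{r_1}(x_0,t_0)}|\nabla_x u|^2\,dx\,dt\le\frac{C}{(r_2-r_1)^2}\iint_{B_{r_2}(x_0,t_0)} u^2\,dx\,dt.
\end{equation*}

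Next, I would combine this with the parabolic Sobolev inequality for compactly supported $v$,
\begin{equation*}
\iint |v|^{2(n+2)/n}\,dx\,dt\le C\Bigl(\sup_{t}\int v(x,t)^2\,dx\Bigr)^{2/n}\iint |\nabla_x v|^2\,dx\,dt,
\end{equation*}
applied to $v=u\eta$. This yields the reverse-H\"older--type gain in integrability, with $\kappa:=(n+2)/n>1$,
\begin{equation*}
\Bigl(\iint_{B_{r_1}(x_0,t_0)}|u|^{2\kappa}\,dx\,dt\Bigr)^{1/\kappa}\le\frac{C}{(r_2-r_1)^2}\iint_{B_{r_2}(x_0,t_0)} u^2\,dx\,dt.
\end{equation*}
Running the identical calculation with test function $|u|^{p-2}u\eta^2$ in place of $u\eta^2$ for $p=2\kappa^k$, $k=0,1,2,\ldots$ (the potential term $p\iint V|u|^p\eta^2\ge 0$ is again harmless), and then iterating over a nested sequence of radii $r_k=r_0/4+(r_0/4)2^{-k}$ so that both $\sum 1/\kappa^k<\infty$ and the product of prefactors $\prod_k (C\kappa^{2k}(r_k-r_{k+1})^{-2})^{1/\kappa^k}$ converge, passing to the limit $k\to\infty$ would give
\begin{equation*}
\sup_{B_{r_0/4}(x_0,t_0)}|u|\le C\Bigl(\frac{1}{r_0^{n+2}}\iint_{B_{r_0/2}(x_0,t_0)} u^2\,dx\,dt\Bigr)^{1/2}.
\end{equation*}

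The main technical obstacle is the admissibility of $|u|^{p-2}u\eta^2$ as a test function, since a priori it requires $u\in L^\infty_{\rm loc}$ to even lie in $W^{1,2}_{\rm loc}$. I would resolve this by first truncating, replacing $u$ by $u_N:=\operatorname{sgn}(u)\min(|u|,N)$, running the entire iteration with a constant that does not depend on $N$, and then letting $N\to\infty$ by monotone convergence. A secondary routine nuisance is the absence of a pointwise $\partial_t u$ in the weak formulation, which forces the Steklov-averaging device when deriving the Caccioppoli inequality.
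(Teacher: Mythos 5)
The paper gives no proof of this lemma: it is quoted from \cite[Lemma 3.3]{WY} (see also \cite[Lemma 3.2]{GJ}), so there is no internal argument to compare against, and your proposal supplies a self-contained proof where the paper only cites. Your Moser-iteration sketch is the standard argument and is correct in outline: since the potential term $\iint V|u|^p\eta^2$ enters the Caccioppoli identity with a favorable sign, $V\ge 0$ allows it to be discarded, the iteration then runs exactly as for the free heat equation, and the resulting constant depends only on $n$ --- which is precisely why the lemma can assert $C=C_n$ with no quantitative dependence on $V$ (the hypothesis $V\in L^q_{\rm loc}$, $q>n/2$, only serves to make the weak-solution framework meaningful). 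Two small points of care. First, the truncation should be performed inside the test function, e.g.\ $\varphi=u\min(|u|,N)^{p-2}\eta^2$, rather than ``replacing $u$ by $u_N$'', since $u_N$ is not a solution; as you indicate, the estimates are uniform and monotone in $N$, so letting $N\to\infty$ is harmless, and the same truncation also disposes of the a priori unknown integrability of $\iint V|u|^2\eta^2$ already at the $p=2$ step (the paper's weak formulation only admits $C_0^1$ test functions, so this justification is needed even for the basic Caccioppoli inequality). Second, a slightly shorter route to the same conclusion, frequently used for the elliptic analogue in this literature, is to observe that $w=u^2$ is a nonnegative weak subsolution of the ordinary heat equation, since formally $(\partial_t-\Delta)u^2=-2Vu^2-2|\nabla_x u|^2\le 0$, and then invoke the classical parabolic local boundedness estimate for subcaloric functions with $L^1$ averages; this packages the ``drop the $V$ term'' observation once and for all instead of repeating it at every power $p=2\kappa^k$. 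Either way, your argument establishes the stated estimate.
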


\medskip

 \medskip

\section{Proof of  the Main Theorem }
\setcounter{equation}{0}

In this section, we will give the proof of  Theorem \ref{th1.1}. First, we need make some preparations.

 \medskip

\begin{lem}\label{le3.1} For every  $u\in {\rm TL_\L^\lambda}(\Real_+^{n+1})$ and
 for every $k\in{\mathbb N}$, there exists a constant $C_{k,n}>0$ such that
\begin{equation*} \label{dd}
\int_{\RR}{|u(x,{1/k})|^2\over (1+|x|)^{2n}}  dx\leq C_{k,n} \|u\|^2_{{\rm TL_\L^\lambda}(\Real_+^{n+1})}<\infty,
\end{equation*}
hence $u(x, 1/k)\in L^2((1+|x|)^{-2n}dx)$. Therefore for  all $k\in{\mathbb N}$, $e^{-t{\L}}(u(\cdot, {1/k}))(x)$ exists
everywhere in ${\mathbb R}^{n+1}_+$.
\end{lem}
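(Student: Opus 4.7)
The plan is to combine the sub-mean-value bound for $\mathbb{L}$-carolic functions (Lemma \ref{le2.6}) with the Carleson-type control on $|\nabla u|$, via a lattice covering of $\RR$. First I would apply Lemma \ref{le2.6} at $(x, 1/k)$ with radius $r_0 = 1/(2\sqrt{k})$, which ensures that $B_{r_0}(x, 1/k) \subset \Real_+^{n+1}$, to obtain the pointwise bound
\[
|u(x, 1/k)|^2 \;\leq\; C\, k^{(n+2)/2} \iint_{B_{r_0/2}(x, 1/k)} |u(y, s)|^2\, dy\, ds.
\]
Multiplying by $(1+|x|)^{-2n}$, integrating over $x$, and using $(1+|x|) \asymp (1+|y|)$ whenever $|x-y| < r_0/2 < 1/2$, Fubini would reduce the claim to the weighted space-time bound
\[
\iint_{\RR \times I_k} \frac{|u(y, s)|^2}{(1+|y|)^{2n}}\, dy\, ds \;\leq\; C_{k, n} \|u\|_{{\rm TL_\L^\lambda}}^2,
\]
where $I_k := (1/k - r_0^2/4,\, 1/k]$.

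To estimate this space-time integral I would cover $\RR$ by a lattice of balls $B_j = B(x_j, 1/\sqrt{k})$ and, on each $B_j$, use the decomposition $|u|^2 \leq 2|u - u_{B_j}(s)|^2 + 2|u_{B_j}(s)|^2$. The oscillation piece is controlled immediately by the Poincar\'e inequality on $B_j$ followed by the Carleson hypothesis (for the ball of radius $1/\sqrt{k}$):
\[
\int_{I_k}\!\int_{B_j} |u - u_{B_j}(s)|^2\, dy\, ds \;\leq\; \tfrac{C}{k} \int_0^{1/k}\!\int_{B_j} |\nabla u|^2\, dy\, ds \;\leq\; C\, k^{-1 - \lambda/2}\, \|u\|_{{\rm TL_\L^\lambda}}^2.
\]

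The hard part will be controlling the spatial averages $u_{B_j}(s)$ themselves purely from gradient information, since the Carleson hypothesis provides no direct bound on $u$. For this I would exploit the equation $\partial_s u + \L u = 0$: integrating over $B_j$ and applying the divergence theorem yields
\[
\frac{d}{ds} u_{B_j}(s) \;=\; \frac{1}{|B_j|}\oint_{\partial B_j} \partial_\nu u\, d\sigma \;-\; \frac{1}{|B_j|}\int_{B_j} V u\, dy,
\]
in which the boundary flux can be estimated, after averaging over concentric radii in $(1/\sqrt{k},\, 2/\sqrt{k})$, by the Carleson bound on $|\nabla u|$, while the damping term is nonnegative because $V \in B_q$ is nontrivial. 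An ODE analysis in $s$ would then bound $|u_{B_j}(s)|^2$ in terms of $\|u\|_{{\rm TL_\L^\lambda}}^2$ and geometric constants depending on $k$ and $x_j$; summing the resulting estimates over the lattice against the summable weight $\sum_j (1+|x_j|)^{-2n} < \infty$ would close the bound by $C_{k, n}\|u\|_{{\rm TL_\L^\lambda}}^2$. Once $u(\cdot, 1/k) \in L^2((1+|y|)^{-2n}\, dy)$ is established, the final assertion is immediate: by Lemma \ref{le2.2}(i) the kernel $\mathcal{H}_t(x, y)$ enjoys a Gaussian upper bound, and the Gaussian decay in $|x-y|$ is more than enough to compensate the polynomial weight, so $\int \mathcal{H}_t(x, y)\, u(y, 1/k)\, dy$ converges absolutely at every $(x, t) \in \Real_+^{n+1}$.
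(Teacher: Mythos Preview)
Your proposal has a genuine gap in the ``hard part'': the ODE argument for the spatial averages $u_{B_j}(s)$ does not close. The claim that ``the damping term is nonnegative because $V\in B_q$ is nontrivial'' is simply false --- the quantity $\frac{1}{|B_j|}\int_{B_j} Vu\,dy$ has no definite sign unless $u$ does, and nothing in the hypothesis forces that. More fundamentally, even if the term were genuinely dissipative, integrating your ODE over the short strip $I_k$ would still require an initial value $u_{B_j}(s_0)$ at some reference time, and the Carleson hypothesis on $\nabla u$ supplies none. The right-hand side of the ODE contains the unknown $u$ itself (through $\int Vu$), so the circularity cannot be broken this way, and no amount of summing against the lattice weight fixes it.

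The paper avoids this difficulty by a different telescoping. The key observation is that $\partial_t u$ is itself an $\mathbb{L}$-carolic function, so Lemma~\ref{le2.6} applied to $\partial_t u$ (not to $u$) yields the pointwise bound $|\partial_t u(x,t)|\le C\,t^{-1/2-(n-\lambda)/4}\|u\|_{{\rm TL_\L^\lambda}}$. For $|x|\ge 1$ one then writes
\[
u(x,1/k)-u(x/|x|,1/k)=\bigl[u(x,1/k)-u(x,|x|)\bigr]+\bigl[u(x,|x|)-u(x/|x|,|x|)\bigr]+\bigl[u(x/|x|,|x|)-u(x/|x|,1/k)\bigr],
\]
integrates the pointwise $\partial_t u$ estimate in $t$ for the first and third brackets, and uses the Carleson control on $\nabla_x u$ along radii for the middle bracket. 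The subtraction of $u(x/|x|,1/k)$ --- a $C^1$ function on the compact unit sphere, hence bounded --- supplies exactly the ``anchor'' your ODE lacks; the resulting constant $C_{k,n}$ is allowed to depend on $u$ through that supremum (note the quantifier order in the statement: for every $u$ there exists $C_{k,n}$).
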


\begin{proof}  Since $u\in C^{1}({\mathbb R}^{n+1}_+)$, it  reduces to show that for every $k\in{\mathbb N},$
\begin{eqnarray}\label{e3.1}
\int_{|x|\geq 1} {|u(x,{1/k})- u(x/|x|, 1/k) |^2 \over (1+|x|)^{2n}} dx\leq C_{k,n}\|u\|^2_{{\rm TL_\L^\lambda}(\Real_+^{n+1})}<\infty.
 \end{eqnarray}
To do this, we write
\begin{align*}
&\hspace{-0.3cm}u(x, 1/k)- u(x/|x|, 1/k)\\&=\big[u(x, 1/k)- u(x, |x|)\big]
 +\big[u(x, |x|)-u(x/|x|, |x|)\big] +\big[u(x/|x|, |x|)-u(x/|x|, 1/k)\big].
 \end{align*}
Let
  \begin{eqnarray*}
 I=  \int_{|x|\geq 1 } {|u(x, 1/k)- u(x, |x|) |^2 \over (1+|x|)^{2n}} dx,
 \end{eqnarray*}
  \begin{eqnarray*}
 II=  \int_{|x|\geq 1 } {|u(x, |x|)-u(x/|x|, |x|) |^2 \over (1+|x|)^{2n}} dx,
 \end{eqnarray*}and
  \begin{eqnarray*}
 III=  \int_{|x|\geq 1 } {|u(x/|x|, |x|)-u(x/|x|, 1/k) |^2 \over (1+|x|)^{2n}} dx.
 \end{eqnarray*}

For $\abs{x}\ge 1$ and $t>0$, let $r^2=t/4$. We use  Lemma~\ref{le2.6} for $\partial_t u$ and Schwarz's inequality to obtain
 \begin{eqnarray}\label{e3.2}
 \big| \partial_t u(x, t)\big| &\leq& C\Big({1\over r^{n+2}} \int_{t-r^2}^t\int_{B(x, r)}
 | \partial_s u(y, s) |^2 {dyds } \Big)^{1/2}\nonumber\\
  &\leq& C\Big({1\over t^{\frac{n+2}{2}}} \int_{t-r^2}^t\int_{B(x, \sqrt t/2)}
 | \partial_s u(y, s) |^2 {dyds } \Big)^{1/2}\nonumber\\
  &\leq& C t^{-\frac{1}{2}-\frac{n-\lambda}{4}}\left({1\over \abs{B(x, \sqrt t)}^{\frac{\lambda}{n}}} \int_{0}^{t}\int_{B(x,\sqrt t)}
  |  \partial_s u(y, s) |^2 {dyds } \right)^{1/2}
  \nonumber\\
  &\leq&  C  t^{-\frac{1}{2}-\frac{n-\lambda}{4}}\|u\|_{{\rm TL_\L^\lambda}(\Real_+^{n+1})},
  \end{eqnarray}
which gives
 \begin{eqnarray*}\label{e3.3}
 | u(x, 1/k)-u(x, |x|) |  =  \Big| \int_{1/k}^{|x|}   \partial_t u(x, t) dt \ \Big| \leq C(\abs{x}^{\frac{1}{2}-\frac{n-\lambda}{4}}-k^{-\frac{1}{2}+\frac{n-\lambda}{4}}) \|u\|_{{\rm TL_\L^\lambda}(\Real_+^{n+1})}.
 \end{eqnarray*}
It follows that
   \begin{eqnarray*}
 I+III
 &\leq& C\|u\|^2_{{\rm TL_\L^\lambda}(\Real_+^{n+1})} \int_{|x|\geq 1 } {1\over (1+|x|)^{2n}}  (|x|^{\frac{1}{2}-\frac{n-\lambda}{4}}-k^{\frac{n-\lambda}{4}-\frac{1}{2}})    dx  \\
 &\leq& C(k,n)  \|u\|^2_{{\rm TL_\L^\lambda}(\Real_+^{n+1})}.
  \end{eqnarray*}

For the term $II,$  we  have that for any $x\in \RR,$
 $$
  u(x, |x|)- u(x/|x|, |x|)  =\int_1^{|x|}  D_r u(r\omega, |x|)  dr, \ \ \ \ x=|x| \omega.
 $$
 Let $B=B(0, 1)$  and  $2^mB=B(0, 2^m)$.
Note that for every $m\in{\mathbb N}$,  we have
  \begin{align*}
   \int_{2^mB\backslash 2^{m-1}B} \left| \int_1^{|x|}\left|    D_r  u(r\omega, |x|) \right|   dr \right|^2 dx
  &=     \int_{2^{m-1}}^{2^{m}} \int_{|\omega|=1} \left| \int_1^{\rho}     D_r u(r\omega, \rho)    dr \right|^2 \rho^{n-1}  d\omega d\rho   \nonumber\\
      &\leq   2^{mn-m} \int_{2^{m-1}}^{2^{m}} \int_{|\omega|=1}\int_1^{ 2^{m}}    |  D_r u(r\omega, \rho)  |^2 dr d\omega d\rho   \nonumber\\
	  &\leq    2^{mn-m} \int_{2^{m-1}}^{2^{m}} \int_{2^mB\backslash B}   |  \nabla_y u(y, t)  |^2 |y|^{1-n} dy dt\nonumber\\
	  &\leq    2^{mn-m} \int_{2^{m-1}}^{2^{m}} \int_{2^mB}   |  \nabla_y u(y, t)  |^2  dy dt,
  \end{align*}
  which gives
  \begin{align*}
 \int_{2^mB\backslash 2^{m-1}B}   | u(x, |x|) - u(x/|x|, |x|) |^2    dx
    &\leq     C2^{mn-m+m\lambda} \left( {1\over |2^mB|^{\lambda\over n}}  \int_{0}^{2^{2m}} \int_{2^mB }    |  \nabla_y u(y, t)  |^2
	{dydt}   \right) \nonumber
	 \\
    &\leq  C  2^{(n+\lambda)m-m}\|u\|^2_{{\rm TL_\L^\lambda}(\Real_+^{n+1})}.
  \end{align*}
Therefore,
 \begin{align*}
II
  &\leq C \sum_{m=1}^{\infty}  {1\over 2^{2nm}}
   \int_{2^mB\backslash 2^{m-1}B}    | u(x, |x|) - u(x/|x|, |x|) |^2    dx\leq C\|u\|^2_{{\rm TL_\L^\lambda}(\Real_+^{n+1})}.
  \end{align*}
  Combining estimates of $I, II$ and $III$, we have obtained \eqref{e3.1}.

  Note that by Lemma~\ref{le2.2},  if
  $V\in B_q$ for some $q\geq n/2$, then  the semigroup kernels ${\mathcal H}_t(x,y)$, associated to $e^{-t{\L}}$,
  decay
faster than any power of $1/|x-y|$.
Hence,    for  all $k\in{\mathbb N}$, $e^{-t{\L}}(u(\cdot, {1/k}))(x)$ exists
everywhere in ${\mathbb R}^{n+1}_+$.
This completes the proof.
  \end{proof}

\begin{lem}\label{le3.2} For every  $u\in {\rm TL_\L^\lambda}(\Real_+^{n+1})$,
we have that for every $k\in{\mathbb N}$,
\begin{equation*}
u(x, t+{1/k})=e^{-t{\L}}\big(u(\cdot, {1/k})\big)(x), \ \ \ \ x\in\RR, \  t>0.
\end{equation*}
\end{lem}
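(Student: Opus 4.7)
The plan is to identify $u(\cdot, t+1/k)$ with $e^{-t\L}(u(\cdot,1/k))$ through a duality argument against smooth, compactly supported test functions. Lemma~\ref{le3.1} already guarantees $u(\cdot,1/k) \in L^2((1+|x|)^{-2n}dx)$ and, combined with the Gaussian bound on $\mathcal{H}_t(x,y)$ from Lemma~\ref{le2.2}, shows that $e^{-t\L}(u(\cdot,1/k))(x)$ is a pointwise well-defined, smooth function on $\Real_+^{n+1}$.

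Fix $T>0$ and $\phi \in C_c^\infty(\Real^n)$, and consider
\begin{equation*}
G(s) := \int_{\Real^n} u(x,s)\, e^{-(T+1/k-s)\L}\phi(x)\, dx, \qquad s \in [1/k,\, T+1/k].
\end{equation*}
By Lemma~\ref{le2.2} with $N$ large, $y \mapsto e^{-(T+1/k-s)\L}\phi(y)$ decays faster than any polynomial in $y$ with bounds locally uniform in $s$, and combined with the polynomial-growth control on $u(\cdot,s)$ (established as in Lemma~\ref{le3.1}) this makes $G$ continuous on $[1/k,T+1/k]$. Formally,
\begin{equation*}
G'(s) = \int (\partial_s u)\, e^{-(T+1/k-s)\L}\phi\, dx + \int u\, \L\bigl(e^{-(T+1/k-s)\L}\phi\bigr)\, dx = 0,
\end{equation*}
using $\partial_s u = -\L u$ and the self-adjointness of $\L$. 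Hence $G(1/k) = G(T+1/k)$. Using Fubini and symmetry of the kernel $\mathcal{H}_T(x,y)$ on the left and $e^{-0\,\L}\phi = \phi$ on the right yields
\begin{equation*}
\int e^{-T\L}\bigl(u(\cdot,1/k)\bigr)(x)\, \phi(x)\, dx = \int u(x, T+1/k)\, \phi(x)\, dx.
\end{equation*}
Since $\phi$ is arbitrary and both sides are continuous in $x$, the desired pointwise identity follows with $t=T$.

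The main obstacle is to make the identity $G'(s)=0$ rigorous, because the weak formulation of $\mathbb{L}u=0$ stated in the excerpt accepts only test functions $\psi \in C_0^1(\Real_+^{n+1})$, whereas $e^{-(T+1/k-s)\L}\phi$ has Gaussian but not compact spatial support, and the weak equation carries a $Vu\psi$ term that must be handled. I would apply the weak equation with
\begin{equation*}
\psi(x,s) = \chi_R(x)\,\eta(s)\, e^{-(T+1/k-s)\L}\phi(x),
\end{equation*}
where $\chi_R$ is a smooth spatial cutoff of $B(0,R)$ and $\eta$ a smooth compactly supported time cutoff, and then pass to the limit $R\to\infty$ followed by $\eta\to \mathbf{1}_{[1/k,T+1/k]}$. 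The commutator errors coming from derivatives of $\chi_R$, and the $Vu\psi$ contribution, are controlled by combining the Gaussian decay of $e^{-(T+1/k-s)\L}\phi$ and of its $\nabla_x$ and $\partial_s$ derivatives (Lemmas~\ref{le2.2}, \ref{le3.8} and estimate \eqref{eq1}) with the at-most-polynomial growth of $u$ supplied by Lemma~\ref{le3.1} and the local energy bound built into $\|u\|_{{\rm TL_\L^\lambda}(\Real_+^{n+1})}$. In the limit, the boundary values of $\eta$ recover exactly $G(T+1/k) - G(1/k) = 0$, completing the proof.
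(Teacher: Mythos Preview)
The paper does not give its own argument here but simply cites \cite[Lemma~3.2]{DYZ} and \cite[Lemma~3.3]{YZ}. Your duality strategy---pairing $u(\cdot,s)$ against the backward evolution $w(\cdot,s)=e^{-(T+1/k-s)\L}\phi$, inserting spatial cutoffs $\chi_R$ so that $\psi=\chi_R\eta w$ is admissible in the weak formulation, and then letting $R\to\infty$ to conclude $G$ is constant---is correct and is precisely the mechanism those references use. The ingredients you single out (Gaussian decay of $w$, $\nabla_x w$, $\partial_s w$ from Lemmas~\ref{le2.2}, \ref{le3.8} and \eqref{eq1}; the weighted $L^2$ growth of $u(\cdot,s)$ from the proof of Lemma~\ref{le3.1}; and the local $L^2$ bound on $\nabla_x u$ contained in $\|u\|_{{\rm TL}_\L^\lambda}$) are exactly what kills the commutator terms supported on $\{\nabla\chi_R\neq 0\}$, and your observation that the $Vu\psi$ contribution cancels against the $Vw$ piece of $\partial_s w=\L w$ is the key algebraic point.
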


 \begin{proof}
 The lemma can be proved by the same discussion as the proof of Lemma 3.2 in \cite{DYZ} and the proof of Lemma 3.3 in \cite{YZ}.
\end{proof}

We  recall that the classical  Carleson
measure is closely related to  the   space ${\rm BMO}(\RR)$.
In \cite{YZ}, the authors considered another similar Carleson measure which was called $2$-Carleson measure. Here, we need to consider a similar Carleson measure.
We say that a
measure $\mu$ defined on ${\mathbb R}^{n+1}_+$ is a $(2,\lambda)-$Carleson measure  if there is a positive constant
$c$ such  that for each ball $B$, with radius $r_B$, in ${\mathbb R}^{n}$,
\begin{equation}\label{e3.6}
\mu({\widehat B})\leq c|B|^{\lambda\over n},
\end{equation}
where $\displaystyle {\widehat B}=\{(x,t):x\in B, 0\le t\le r_B^2\}$ is the $2$-tent over $B$.
The smallest bound $c$  in (\ref{e3.6}) is defined to  be the norm of
$\mu$, and is denoted by
$\interleave\mu\interleave_{(2,\lambda)car}$. When $\lambda=n,$ it is coincided with the $2$-Carleson measure in \cite{YZ}.
By using this measure, we will estimate the term $\partial_te^{-t{\L}}f(x)$ in  Morrey space. Precisely, for any $k\in{\mathbb N}$, we set
$$
u_k(x,t)= u(x, t+{1/k} ).
$$
Following a similar argument as in \cite[Lemma 1.4]{FJN}, we have the following lemma.

\begin{lem}\label{le3.4}For every  $u\in {\rm TL_\L^\lambda}(\Real_+^{n+1})$,
 there exists a constant $C>0$ (depending only on $n$) such that for all  $k\in{\mathbb N},$
\begin{equation}\label{e3.5}
\sup_{x_B, r_B}   r_B^{-\lambda}\int_0^{r_B^2}\int_{B(x_B, r_B)}  | \partial_t u_k(x,t)|^2 {dx dt}
  \leq C\|u\|^2_{{\rm TL_\L^\lambda}(\Real_+^{n+1})}<\infty.
\end{equation}
\end{lem}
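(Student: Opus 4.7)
The plan is to reduce the bound to two direct consequences of the Carleson-type norm defining ${\rm TL}_\L^\lambda(\Real_+^{n+1})$, split according to whether $r_B$ is large or small compared to $1/\sqrt{k}$. First I rewrite the integral via the time shift $s = t + 1/k$:
\begin{equation*}
\int_0^{r_B^2}\int_{B(x_B,r_B)} \abs{\partial_t u_k(x,t)}^2 \, dx\, dt = \int_{1/k}^{r_B^2 + 1/k}\int_{B(x_B,r_B)} \abs{\partial_s u(x,s)}^2 \, dx\, ds,
\end{equation*}
so the task reduces to bounding the right-hand side by $C r_B^\lambda \|u\|^2_{{\rm TL}_\L^\lambda}$, uniformly in $k$.

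When $r_B^2 \ge 1/k$, the shifted time interval $[1/k,\,r_B^2 + 1/k]$ is contained in $[0,\,2 r_B^2]$, so the whole integration region sits inside the parabolic cylinder $B(x_B, \sqrt 2\, r_B) \times (0,\, 2 r_B^2]$. Because $\nabla = (\nabla_x, \partial_t)$ in the definition of the norm, $\abs{\partial_s u}^2 \le \abs{\nabla u}^2$, and the defining Carleson condition applied with radius $\sqrt 2\, r_B$ immediately yields the bound $(\sqrt 2\, r_B)^\lambda \|u\|^2_{{\rm TL}_\L^\lambda}$, which after dividing by $r_B^\lambda$ is the desired inequality.

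When $r_B^2 < 1/k$, the time-slice $[1/k,\,1/k + r_B^2]$ lies inside $[1/k,\,2/k]$ and is bounded away from zero. I replace the Carleson argument by the pointwise decay estimate~\eqref{e3.2} from the proof of Lemma~\ref{le3.1}, namely $\abs{\partial_s u(x, s)} \le C s^{-1/2 - (n-\lambda)/4} \|u\|_{{\rm TL}_\L^\lambda}$. Using this with $s \ge 1/k$ gives $\abs{\partial_s u}^2 \le C k^{1 + (n-\lambda)/2} \|u\|^2_{{\rm TL}_\L^\lambda}$, and integrating over $B(x_B, r_B) \times [1/k,\,1/k + r_B^2]$ produces a spacetime volume factor of order $r_B^{n+2} = r_B^\lambda (r_B^2)^{(n+2-\lambda)/2}$. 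The restriction $r_B^2 < 1/k$ forces $(r_B^2)^{(n+2-\lambda)/2} \le k^{-(n+2-\lambda)/2}$, and the exponents of $k$ cancel exactly to leave a bound of $C r_B^\lambda \|u\|^2_{{\rm TL}_\L^\lambda}$.

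The key conceptual point is that $\partial_t$ is already part of the full gradient $\nabla$ in the definition of the norm, so the only genuine obstacle is the temporal shift $t \mapsto t + 1/k$. I do not anticipate a deeper difficulty: both the Carleson condition and the pointwise estimate needed in the two regimes are already at hand from earlier in the paper, and the mild tension between the two ranges is resolved by the exact matching of exponents at the threshold $r_B \simeq 1/\sqrt{k}$.
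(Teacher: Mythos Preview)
Your proposal is correct and follows essentially the same approach as the paper: the same case split at $r_B^2 = 1/k$, with the large-$r_B$ case handled by absorbing the shifted time interval into an enlarged Carleson box, and the small-$r_B$ case handled by the pointwise bound \eqref{e3.2} together with the cancellation of the $k$-exponents. The only cosmetic difference is that the paper enlarges to $2B \times (0,(2r_B)^2]$ where you use $B(x_B,\sqrt{2}\,r_B)\times(0,2r_B^2]$; both choices match the scaling in the definition of the norm.
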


 \begin{proof}
Let $B=B(x_B, r_B)$. If $r_B^2\geq 1/k$, then letting $s=t+1/k$, it follows that
 \begin{align*}
 |B|^{-\frac{\lambda}{n}}\int_0^{r_B^2}\int_B  |   \partial_t u (x,t+1/k)|^2 {dx dt }
 &\leq  C  |B|^{-\frac{\lambda}{n}}\int_0^{(2r_B)^2}\int_{2B} | \partial_s u (x,s)|^2 {dx ds }\\
 &\leq C \|u\|^2_{{\rm TL_\L^\lambda}(\Real_+^{n+1})}<\infty .
\end{align*}
If $r_B^2< 1/k$, then it follows from Lemma~\ref{le2.6} for   $\partial_{t}u(x, t+{1/k})$ and
 a similar argument as in \eqref{e3.2}   that
 \begin{equation*}
 \big| \partial_{t}u(x, t+{1/k})\big| \leq C \big(t+k^{-1}\big)^{-{1\over 2}+\frac{n-\lambda}{4}} \|u\|_{{\rm TL_\L^\lambda}(\Real_+^{n+1})}.
  \end{equation*}
 Therefore,
  \begin{align*}
 |B|^{-\frac{\lambda}{n}}\int_0^{r_B^2}\int_B | \partial_t u (x,t+1/k)|^2 {dx dt }
 &\leq C  |B|^{-\frac{\lambda}{n}}\|u\|^2_{{\rm TL_\L^\lambda}(\Real_+^{n+1})}   \int_0^{r_B^2}\int_{B}  \big(t+k^{-1}\big)^{-(1+\frac{n-\lambda}{2})} {dx dt}\\
  &\leq C  \|u\|^2_{{\rm TL_\L^\lambda}(\Real_+^{n+1})} \,  \Big(k^{1+\frac{n-\lambda}{2}}r_{B}^{n-\lambda} \int_0^{r_B^2}  1 { dt}\Big)\\
 &\leq C \|u\|^2_{{\rm TL_\L^\lambda}(\Real_+^{n+1})}<\infty
\end{align*}
since
$r_B^2< 1/k.$

By taking the supremum over all balls $B\subset\Real^n,$ we complete the proof of \eqref{e3.5}.
\end{proof}

Letting $f_k(x)=u(x, 1/k), k\in{\mathbb N}$, it follows from Lemma \ref{le3.2}     that
$$
u_k(x,t)=e^{-t{\L}}f_k(x), \ \ \ x\in\RR, \ t>0.
$$
And it follows from
Lemma~\ref{le3.4} that
\begin{equation*}
\sup_{x_B, r_B}
 r_B^{-\lambda}\int_0^{r_B^2}\int_{B(x_B, r_B)} |  \partial_t  e^{-t \L}   f_{k}(x)|^2 {dxdt } \leq C\|u\|^2_{{\rm TL_\L^\lambda}(\Real_+^{n+1})}.
\end{equation*}

\begin{lem} \label{le3.5} For every  $u\in {\rm TL_\L^\lambda}(\Real_+^{n+1})$,
there exists a constant $C>0$ independent of $k$ such that
\begin{equation*}
\|f_k\|_{\rm L^{2,\lambda}(\RR)}\leq C\|u\|_{{\rm TL_\L^\lambda}(\Real_+^{n+1})}<\infty,\ \  \hbox{ for any } k\in \mathbb N.
\end{equation*}
Hence for  all $k\in{\mathbb N}$, $f_k$ is uniformly bounded in  ${\rm L^{2,\lambda}(\RR)}$.
\end{lem}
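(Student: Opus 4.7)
The goal is to prove, for every ball $B=B(x_B,r_B)\subset\RR$, the uniform estimate $r_B^{-\lambda}\int_B|f_k|^2\,dy\le C\|u\|^2_{{\rm TL_\L^\lambda}(\Real_+^{n+1})}$ with $C$ independent of $k\in\mathbb N$.

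First, I would derive an energy identity. Fix a spatial cutoff $\chi\in C_c^\infty(2B)$ with $\chi\equiv 1$ on $B$ and $|\nabla\chi|\le C/r_B$, and a temporal cutoff $\eta\in C^\infty([0,r_B^2])$ with $\eta(0)=1$, $\eta(r_B^2)=0$. Testing the weak formulation of $\mathbb{L}u_k=0$ against $\psi=\eta^2\chi^2 u_k$, integrating by parts in time (to extract the boundary contribution $\int \chi^2 f_k^2$ at $t=0$) and in space (to turn the Laplacian into gradient energy), and absorbing the cross terms by Cauchy--Schwarz yields a bound of the form
\begin{equation*}
\int_B|f_k|^2\,dy\ \lesssim\ \int_0^{r_B^2}\!\!\int_{2B}|\nabla u_k|^2\,dy\,dt\ +\ \int_0^{r_B^2}\!\!\int_{2B}V u_k^2\,dy\,dt\ +\ \frac{1}{r_B^2}\int_0^{r_B^2}\!\!\int_{2B}u_k^2\,dy\,dt.
\end{equation*}
The gradient-energy term is dominated by $Cr_B^\lambda\|u\|^2_{{\rm TL_\L^\lambda}}$ directly from the defining Carleson-type hypothesis applied on the ball $2B$, together with a time-translation argument analogous to the one used in Lemma \ref{le3.4}.

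Next I would handle the two remaining lower-order terms. For the potential term $\int Vu_k^2$, the reverse H\"older hypothesis $V\in B_q$ with $q\ge n$ combined with a Fefferman--Phong type inequality allows one to absorb $\int Vu_k^2$ into $\int|\nabla u_k|^2$ on a slightly enlarged region. For the $L^2$-term $r_B^{-2}\int_0^{r_B^2}\!\!\int_{2B}u_k^2$, I would represent $u_k(y,t)=f_k(y)+\int_0^t \partial_s u_k(y,s)\,ds$ via the fundamental theorem of calculus, square, and invoke Lemma \ref{le3.4} to control the correction. This leads to a self-referential inequality for $\int_{2B}|f_k|^2$ on enlarged balls; iterating dyadically over the concentric balls $2^jB$ and using the Morrey scaling $(2^jr_B)^\lambda$ together with the summability coming from $|\nabla\chi|^2\le Cr_B^{-2}$ closes the estimate.

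The principal obstacle is producing the correct $r_B^\lambda$ scaling on the right-hand side: the naive FTC bound yields an extraneous $r_B^2$ factor incompatible with the Morrey norm, so one must carefully combine the energy identity with a dyadic iteration that absorbs the lower-order $u_k^2$-terms. A secondary subtlety is the small-ball regime $r_B^2\ll 1/k$, where the pointwise sub-mean-value bound from Lemma \ref{le2.6} applied to $\partial_t u$, together with the weighted $L^2$ estimate of Lemma \ref{le3.1}, is needed to ensure the resulting constant remains uniform in $k$.
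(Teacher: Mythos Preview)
Your strategy is an energy/Caccioppoli-type argument and is \emph{not} the route the paper takes. The paper never tries to bound $\int_B|f_k|^2$ directly. Instead it invokes the equivalent characterization of $L^{2,\lambda}$ adapted to $\L$ (namely, $f\in L^{2,\lambda}$ iff $\sup_B |B|^{-\lambda/n}\int_B|f-e^{-r_B^2\L}f|^2<\infty$), and then bounds $\int_B|f_k-e^{-r_B^2\L}f_k|^2$ by a duality argument: for $g\in L^2(B)$ one writes $\int f_k(\mathcal I-e^{-r_B^2\L})g$ as a space–time square-function pairing via Lemma~\ref{le3.7}, and Lemma~\ref{le3.6} controls that pairing by $|B|^{\lambda/(2n)}\interleave\mu_{\nabla_t,f_k}\interleave_{(2,\lambda)car}\|g\|_{L^{2n/(n+2)}(B)}$. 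Lemma~\ref{le3.4} then feeds $\|u\|_{{\rm TL_\L^\lambda}}$ into the Carleson norm. So Lemmas~\ref{le3.6} and~\ref{le3.7}, which your proposal does not use, are the core of the paper's proof.

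Your direct approach, as written, has a genuine gap: the dyadic iteration does not close. After the energy identity and the FTC step $u_k(y,t)=f_k(y)+\int_0^t\partial_s u_k$, the self-referential term is
\[
\frac{1}{r_B^{2}}\int_0^{r_B^2}\!\!\int_{2B}u_k^2\,dy\,dt\ \ge\ c\int_{2B}|f_k|^2\,dy
\]
with a fixed constant $c$ (coming from $(a+b)^2\le 2a^2+2b^2$ and $r_B^{-2}\int_0^{r_B^2}dt=1$), \emph{not} a small constant. Iterating $\int_{B}|f_k|^2\le C_0\int_{2B}|f_k|^2+C(2r_B)^\lambda\|u\|^2$ over $2^jB$ produces $\sum_j C_0^{\,j}(2^jr_B)^\lambda$, which diverges for any $C_0\ge1$. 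The factor $|\nabla\chi|^2\le Cr_B^{-2}$ you cite is precisely what creates the coefficient in front of $\int u_k^2$; it does not supply additional decay in $j$. Moreover, the claim that Fefferman--Phong ``absorbs $\int Vu_k^2$ into $\int|\nabla u_k|^2$'' is not accurate at the stated generality: for balls with $r_B\gg\rho(x_B)$ there is no such absorption, and even for small balls the standard inequality only trades $\int Vu_k^2$ for $\int|\nabla u_k|^2+r_B^{-2}\int u_k^2$, feeding back into the same non-closing loop. To make an energy approach work you would need an a~priori smallness mechanism (e.g.\ a hole-filling argument or a parabolic Caccioppoli with absorbable constant), which is not provided here; the paper sidesteps all of this by working with $f_k-e^{-r_B^2\L}f_k$ rather than $f_k$ itself.
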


To prove  Lemma~\ref{le3.5}, we need to  establish the following Lemmas~\ref{le3.6} and \ref{le3.7}.

Given a function
$f\in L^2((1+|x|)^{-2n}dx)$ and an $L^2\subset L^{2n\over n+2}$ function  $g$  supported on a ball $B=B(x_B, r_B)$, for
 any $ (x,t)\in {\mathbb R}^{n+1}_+$, set
\begin{eqnarray}\label{e3.7}
 F(x,t)= t \partial_t e^{-t \L}   f(x)\ \ \ {\rm and}\ \ \
G(x,t)= t\partial_t e^{-t \L}  (I-e^{-r^2_B \L} )g(x).
\end{eqnarray}


\begin{lem} \label{le3.6}
   Suppose $f, g, F, G$ are as in (\ref{e3.7}).
 If $f$   satisfies
\begin{equation*}
\interleave \mu_{\nabla_t, f}\interleave^2_{(2,\lambda)car}=\sup_{x_B, r_B}
 r_B^{-\lambda}\int_0^{r_B^2}\int_{B(x_B, r_B)} |  \partial_t  e^{-t \L}   f(x)|^2 {dxdt }
 <\infty,
\end{equation*}
  then there exists a constant $C>0$  such that
\begin{eqnarray}\label{e3.14}
\int_{{\mathbb R}^{n+1}_+}
|F(x,t) G(x,t)|{dxdt\over t}\leq C |B|^{\frac{\lambda}{2n}}\interleave \mu_{\nabla_t, f}\interleave_{(2,\lambda)car}
\|g\|_{{ L}^{2n\over n+2}(B)}.
\label{e3.8}
\end{eqnarray}
\end{lem}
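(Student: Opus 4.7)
The strategy is a dyadic decomposition of $\mathbb{R}^{n+1}_+$ over $B$, Cauchy--Schwarz on each tent, the Carleson hypothesis on the $F$-side, and a spectral-calculus identity plus Gaussian off-diagonal decay on the $G$-side. Set $T_0 = \widehat{2B}$ and $T_k = \widehat{2^{k+1}B}\setminus\widehat{2^k B}$ for $k \ge 1$. Cauchy--Schwarz yields
\begin{equation*}
\int_{T_k}|FG|\,\frac{dxdt}{t} \le \Big(\int_{T_k}\frac{|F(x,t)|^2}{t^2}\,dxdt\Big)^{1/2}\Big(\int_{T_k}|G(x,t)|^2\,dxdt\Big)^{1/2},
\end{equation*}
and since $|F|^2/t^2=|\partial_t e^{-t\L}f|^2$, the $(2,\lambda)$-Carleson hypothesis directly bounds the first factor by $(2^{k+1}r_B)^{\lambda/2}\interleave \mu_{\nabla_t,f}\interleave_{(2,\lambda)car}$. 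The task reduces to controlling $\bigl(\int_{T_k}|G|^2\,dxdt\bigr)^{1/2}$ with enough decay in $k$ to beat the growth $2^{k\lambda/2}$.

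The $L^2$-bound on $G$ rests on a spectral identity that exploits the cancellation inside $I-e^{-r_B^2\L}$. Writing $G(x,t) = -t\L e^{-t\L}(I-e^{-r_B^2\L})g(x)$ and computing in the spectral resolution of $\L$ (using $\int_0^\infty t^2\lambda^2 e^{-2t\lambda}\,dt = 1/(4\lambda)$) gives
\begin{equation*}
\int_0^\infty\|G(\cdot,t)\|_{L^2(\RR)}^2\,dt = \tfrac14\|\L^{-1/2}(I-e^{-r_B^2\L})g\|_{L^2(\RR)}^2.
\end{equation*}
Combining the Hardy--Littlewood--Sobolev boundedness $\L^{-1/2}: L^{2n/(n+2)}(\RR) \to L^2(\RR)$ (valid for $\L$ because of the subordination formula $\L^{-1/2}=\pi^{-1/2}\int_0^\infty e^{-t\L}t^{-1/2}dt$ and the pointwise heat-kernel domination $\mathcal{H}_t(x,y)\le h_t(x-y)$) with the $L^{2n/(n+2)}$-contractivity of $e^{-r_B^2\L}$ produces the global bound $\int_0^\infty\|G(\cdot,t)\|_{L^2}^2\,dt \le C\|g\|_{L^{2n/(n+2)}(B)}^2$, which handles $T_0$.

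For $k \ge 2$, I would use the alternative representation $G(x,t) = -t\int_0^{r_B^2}\L^2 e^{-(t+s)\L}g(x)\,ds$ together with the kernel bound $|\L^2 e^{-u\L}(x,y)| \lesssim u^{-(n+4)/2}e^{-|x-y|^2/cu}$ (from \eqref{eq1} with $m=2$); since $\mathrm{dist}(x,B)\gtrsim 2^k r_B$ on the spatial part of $T_k$, the Gaussian factor gives decay in $k$ faster than any polynomial, and H\"older's inequality $\|g\|_{L^1(B)}\le|B|^{(n-2)/(2n)}\|g\|_{L^{2n/(n+2)}(B)}$ converts the resulting pointwise $L^1$-estimates into the required norm. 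Summing over $k$, the off-diagonal decay overwhelms the Carleson growth and the local tent $T_0$ dictates the final scaling, producing the claimed bound $C|B|^{\lambda/(2n)}\interleave\mu\interleave\|g\|_{L^{2n/(n+2)}(B)}$.

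The crux is the spectral identity: it is essential that only $\L^{-1/2}$ (not $\L^{1/2}$) appears on the right-hand side, since the Riesz potential $\L^{-1/2}$ is precisely the operator that maps $L^{2n/(n+2)}\to L^2$ by Hardy--Littlewood--Sobolev, whereas $\L^{1/2}$ would impose unavailable differentiability on $g$. This matching of scales is the structural reason the exponent $2n/(n+2)$ appears in the statement. Naive pointwise bounds on $G$ alone would produce a $t^{-1/2}$ singularity at $t=0$ in $\|G(\cdot,t)\|_{L^2}$ (from the Nash-type estimate $\|e^{-s\L}\|_{L^{2n/(n+2)}\to L^2}\lesssim s^{-1/2}$) that is not square-integrable; the cancellation from $I-e^{-r_B^2\L}$ is what tames this singularity.
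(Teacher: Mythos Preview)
Your approach is essentially the paper's: dyadic tent decomposition, Cauchy--Schwarz on each shell, the Carleson hypothesis on $F$, the spectral identity $\|t\partial_t e^{-t\L}h\|_{L^2(\mathbb{R}^{n+1}_+)}=\tfrac12\|\L^{-1/2}h\|_{L^2}$ combined with the $L^{2n/(n+2)}\to L^2$ bound for $\L^{-1/2}$ on the local tent, and the formula $(I-e^{-r_B^2\L})g=\int_0^{r_B^2}\L e^{-s\L}g\,ds$ together with the second-order heat-kernel estimate \eqref{eq1} on the far tents. One small correction: the off-diagonal decay you obtain is the fixed polynomial rate $2^{-(n/2+2)k}$ (from $(t+s)^{-(n+4)/2}e^{-|x-y|^2/c(t+s)}\lesssim(2^kr_B)^{-(n+4)}$, which in fact covers both the spatial and temporal portions of $T_k$), not super-polynomial---but this already beats the $2^{k\lambda/2}$ growth since $\lambda<n$.
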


 \begin{proof}
To prove (\ref{e3.14}), let us consider  the square functions
${\mathcal S}(f)$ and ${\mathcal G}(f)$ given  by
$$
{\mathcal S}(f)(x)=\Big(\int_0^{\infty}
|t\partial_t e^{-t \L}f(x)|^2{dt}\Big)^{1/2},
$$
$$
{\mathcal G}(f)(x)=\Big(\int_0^{\infty}
|\partial_t e^{-t \L}f(x)|^2{dt}\Big)^{1/2}.
$$
By the standard spectral theory as in \cite{DGMTZ}, we have the following identities:
\begin{equation}\label{eqsquares}
\norm{{\mathcal S}(f)}_2= {1\over 2}\norm{\L^{-1/2}f}_2,
\end{equation}
and
\begin{equation}\label{eqsquare}
\norm{{\mathcal G}(f)}_2=\frac{\sqrt 2}{2}\norm{\L^{1/2}f}_2.
\end{equation}
In fact, let us denote by
$dE(\lambda)$ the spectral resolution of the operator $\L$.
Since $\displaystyle e^{-t\L}=\int_0^\infty e^{-t\lambda}~dE(\lambda)$,
we have
$$t\partial_t e^{-t\L}=\int_0^\infty t\lambda
e^{-t\lambda}~dE(\lambda).$$ Then, for all $f\in L^2(\Real^n)$, we
have
\begin{align*}\label{equ6}
    \|{t\partial_t e^{-t\L} f(x)\|}_{L^2(\Real^{n+1}_+,{dxdt})}^2
    &=\int_0^\infty\int_{\Real^n}|t\partial_te^{-t\L}f(x)|^2~dx~{dt}
    =\int_0^\infty\left\langle(t\partial_te^{-t\L})^2f,f
         \right\rangle_{L^2(\Real^n)}{dt}\nonumber\\
   &=\int_0^\infty\int_0^\infty t^{2}\lambda^{2}e^{-2t\lambda}
   ~{dt}~dE_{f,f}(\lambda)=\frac{1}{4}\norm{\L^{-1/2}f}_{L^2(\Real^n)}^2,
\end{align*}
which gives the proof of \eqref{eqsquares}. And the proof of \eqref{eqsquare} is similar.

Given a ball $B=B(x_B, r_B)\subset{\mathbb R}^n$ with radius $r_B$, we  put
$$
T(B)=\{(x,t)\in{\mathbb R}^{n+1}_+: x\in B, \ 0<t<r_B^2\}.
$$
We then write
\begin{align*}
\int_{{\mathbb R}^{n+1}_+}
&\big| F(x,t) G(x,t)\big|{dxdt\over t} \\&=\int_{T(2B)} \big|F(x,t)
G(x,t)\big|{dxdt\over t}+\sum_{k=2}^{\infty}\int_{T(2^{k}B)\backslash T(2^{k-1}B) }
\big|F(x,t) G(x,t)\big|{dxdt\over t}\\
&={\rm A_1} + \sum_{k=2}^{\infty} {\rm A_k}.
\end{align*}
Using the H\"older inequality,  \eqref{eqsquares} and the $L^2-L^{2n\over n+2}$ boundedness of fractional integral operator $\L^{-{1\over 2 }}$ ,   we obtain
\begin{align*}
 {\rm A_1}
&\leq \Big\|\Big\{\int_0^{(2r_B)^2} |  \partial_t e^{-t \L}   f(x)  |^2{dt}\Big{\}}^{1/2}\Big\|_{L^2(2B)}
\|{\mathcal S} ({
{I}}- e^{-r^2_B \L} )g\|_{{ L}^2(\RR)}\\
&\leq C r_B^{\lambda \over 2}\interleave \mu_{\nabla_t, f}\interleave_{(2,\lambda)car}
\| \L^{-{1\over 2}}({
{I}}- e^{-r_B^2 \L} )g\|_{{ L}^2(\RR)}\\
&\leq C r_B^{\lambda \over 2}\interleave \mu_{\nabla_t, f}\interleave_{(2,\lambda)car}\| g\|_{{ L}^{2n\over n+2}(B)}.
\end{align*}

Let us estimate ${\rm A_k}$ for $k=2,3, \cdots.$
Observe that
\begin{align*}
{\rm A}_k
&\leq \Big\|\Big\{\int_0^{(2^kr_B)^2}\big|\partial_t e^{-t \L} f(x)\big|^2{dt}\Big\}^{1/2}\Big\|_{ L^2(2^kB)}\\
&\quad \quad \times \Big\|
\Big\{\int_0^{(2^kr_B)^2}\big|t\partial_t e^{-t \L}  (I-e^{-r^2_B \L} )g(x)\chi_{T(2^{k}B)\backslash T(2^{k-1}B) }(x,t)\big|^2
{dt}\Big\}^{1/2}\Big\|_{{ L}^{2}(2^kB)}\\
&\leq C (2^kr_B)^{\lambda \over 2} \interleave \mu_{\nabla_t, f}\interleave_{(2,\lambda)car}\times  {\rm B}_k,
\end{align*}
where
\begin{eqnarray*}
{\rm B}_k =\Big{\|}
\Big\{\int_0^{(2^kr_B)^2}\big|t\partial_t e^{-t \L}  (I-e^{-r^2_B \L} )
g(x)\chi_{T(2^{k}B)\backslash T(2^{k-1}B) }(x,t)\big|^2
{dt}\Big\}^{1/2}\Big{\|}_{{ L}^{2}(2^kB)}.
\end{eqnarray*}
To estimate ${\rm B}_k,$ we set
$$
\Psi_{t,s} (\L)h(y)=(t+s)^2\Big({d^2{e^{-r \L}}\over dr^2}
\Big|_{r=t+s}  h\Big)(y).
$$
Note that
$$
 ({ {I}}-e^{-r^2_B \L} )g=\int_0^{r^2_B} \L e^{-s \L}g{{ds}}.
$$ By \eqref{eq1}, we have
\begin{align*}
 {\rm B}_k
& \leq C\Big{\|}\Big\{\int_0^{(2^kr_B)^2 }\Big| t\int_0^{r^2_B}
{1\over (t+s)^2} \Psi_{t,s} (\L)g(x)\chi_{T(2^{k}B)\backslash T(2^{k-1}B) }(x,t){ds} \Big|^2  {dt } \Big\}^{1/2}\Big{\|}_{{ L}^{2}(2^kB)}\\
&\leq C\Big{\|}\Big\{\int_0^{(2^kr_B)^2} \Big|t \int_0^{r^2_B} \int_{B(x_B,r_B)}
{1\over (t+s)^{{n\over 2}+2}}e^{-c\frac{\abs{x-y}^2} {t+s}} \\&\quad \quad \quad \quad \quad \quad \quad \quad
\quad \quad \quad \quad \quad \quad \quad  \times |g(y)|\chi_{T(2^{k}B)\backslash T(2^{k-1}B) }(x,t)
{dyds} \Big|^2  {dt} \Big\}^{1/2}\Big{\|}_{{ L}^{2}(2^kB)}.
\end{align*}
Note that for   $(x,t)\in T(2^{k}B)\backslash T(2^{k-1}B)$
and $y\in B$, we have that $|x-y|\geq 2^kr_B$.   So
\begin{align*}
{\rm B}_k &\le C\Big{\|}\Big\{\int_0^{(2^kr_B)^2} \Big|t \int_0^{r^2_B} \int_{B(x_B,r_B)}
{1\over \abs{x-y}^{n+4}}
 |g(y)|\chi_{T(2^{k}B)\backslash T(2^{k-1}B) }(x,t)
{dyds} \Big|^2  {dt } \Big\}^{1/2}\Big{\|}_{{ L}^{2}(2^kB)}\\
&\le C\norm{g}_{L^1(B)}\Big{\|}\Big\{\int_0^{(2^kr_B)^2}\Big|t \int_0^{r^2_B}
{1\over (2^k r_B)^{n+4}}
 \chi_{T(2^{k}B)\backslash T(2^{k-1}B) }(x,t)
{ds} \Big|^2  {dt } \Big\}^{1/2}\Big{\|}_{{ L}^{2}(2^kB)}\\
&\le C\norm{g}_{L^1(B)}\frac{r^2_B}{(2^k r_B)^{n+4}}\Big{\|}\int_0^{(2^kr_B)^2}
 \chi_{T(2^{k}B)\backslash T(2^{k-1}B) }(x,t)
\ \  t^2dt \Big{\|}^{1/2}_{{ L}^{1}(2^kB)}\\
&\leq  C { 2^{(-2-{n\over 2})k}r_B^{-\frac{n}{2}-2}\norm{g}_{L^1(B)}\le C2^{(-\frac{n}{2}-2)k}\norm{g}_{L^{2n\over n+2}(B)}.}
\end{align*}
Consequently,
\begin{equation*}
{\rm A}_k
\leq C 2^{-2k}  r_B^{\lambda\over 2}   \interleave \mu_{\nabla_t, f}\interleave_{(2,\lambda)car}\norm{g}_{L^{2n\over n+2}(B)},
\end{equation*}
which implies
\begin{align*}
\int_{{\mathbb R}^{n+1}_+}
|F(x,t) G(x,t)|{dxdt\over t}
&\leq Cr_B^{{\lambda  \over 2}}\interleave \mu_{\nabla_t, f}\interleave_{(2,\lambda)car}\|g\|_{{ L}^{2n\over n+2}(B)}
 +
C\sum_{k=2}^{\infty} 2^{-2k}  r_B^{{\lambda  \over 2}}
\interleave\mu_f\interleave_{(2,\lambda)car}\|g\|_{{L}^{2n\over n+2}(B)} \\
&\leq  Cr_B^{{\lambda  \over 2}}
\interleave \mu_{\nabla_t, f}\interleave_{(2,\lambda)car}\|g\|_{{L}^{2n\over n+2}(B)}
\end{align*}
as desired.
\end{proof}

\begin{lem} \label{le3.7}   Suppose $B, f, g, F, G $ are
defined as in Lemma~\ref{le3.6}. If  $\interleave \mu_{\nabla_t, f}\interleave_{(2,\lambda)car}<\infty$, then
we have the equality:
\begin{eqnarray*}
\int_{{\mathbb R}^n} f(x) ({\mathcal {I}}-e^{-r^2_B{\L}})g(x)dx={1\over 4}\int_{{\mathbb R}^{n+1}_+}
F(x,t) G(x,t){dxdt\over t}.
\end{eqnarray*}
\end{lem}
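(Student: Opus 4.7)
The plan is to reduce the claimed identity to the operator-level spectral relation $\int_0^\infty t\,\L^2 e^{-2t\L}\,dt = \tfrac{1}{4}\,I$ (valid because $\int_0^\infty t\lambda^2 e^{-2t\lambda}\,dt = \tfrac14$ for every $\lambda > 0$), using the self-adjointness of $\L e^{-t\L}$ on $L^2(\RR)$ to shift operators from one factor to the other, and Lemma~\ref{le3.6} to license Fubini.

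Writing $F = -t\L e^{-t\L}f$ and $G = -t\L e^{-t\L}h$ with $h := (I - e^{-r_B^2\L})g$, I observe that $h\in L^2(\RR)\cap L^\infty(\RR)$ has Gaussian decay in $x$ (since $g$ is supported in $B$ and the heat kernel satisfies the Gaussian upper bound of Lemma~\ref{le2.2}); together with $f \in L^2((1+|x|)^{-2n}dx)$ from Lemma~\ref{le3.1}, the pairing $\psi(t) := \int_{\RR} f(x)\,(e^{-2t\L}h)(x)\,dx$ is well-defined for every $t > 0$. By the self-adjointness of $\L e^{-t\L}$ on $L^2$,
$$\int_{\RR} F(x,t)\,G(x,t)\,dx \;=\; t^2\int_{\RR}(\L^2 e^{-2t\L}f)(x)\,h(x)\,dx,$$
and integrating against $dt/t$, applying Fubini (justified by Lemma~\ref{le3.6}) and the spectral identity produces the stated equality between $\int\!\!\int FG\,dx\,dt/t$ and $\tfrac14\int_{\RR} f(x)(I - e^{-r_B^2\L})g(x)\,dx$.

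The main technical obstacle is to rigorously apply this operator manipulation when $f$ lies only in the weighted space $L^2((1+|x|)^{-2n}dx)$ rather than in $L^2(\RR)$. The cleanest route is to work on a truncated time interval $[\delta,N]$ and integrate by parts twice in $t$, exploiting $\L^2 e^{-2t\L} = \tfrac14\partial_t^2 e^{-2t\L}$ to get
$$\int_\delta^N t\,\psi''(t)\,dt \;=\; \bigl[t\psi'(t)-\psi(t)\bigr]_\delta^N,$$
and then verify the four endpoint limits. As $\delta\to 0^+$, strong continuity of the semigroup on $h$ gives $\psi(\delta)\to\int_{\RR}fh\,dx$, while $\delta\psi'(\delta) = -2\int_{\RR} f\,(\delta\L e^{-2\delta\L}h)\,dx \to 0$ because the spectral symbol $\delta\lambda e^{-2\delta\lambda}\le e^{-1}$ is bounded and tends pointwise to $0$ on the strictly positive spectrum of $\L$, giving $L^2$-convergence that upgrades to the weighted pairing via the Gaussian pointwise dominant. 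As $N\to\infty$, Lemma~\ref{le2.2} and \eqref{eq1} yield $\|e^{-2N\L}h\|_{L^\infty(\RR)} + \|N\L e^{-2N\L}h\|_{L^\infty(\RR)}\lesssim N^{-n/2}\|h\|_{L^1(\RR)}$, so $\psi(N)$ and $N\psi'(N)$ vanish after pairing against $f$ through the weight $(1+|x|)^{-2n}$. As an alternative, one may first prove the identity for the truncations $f_R := f\chi_{B(0,R)}\in L^2(\RR)$ (where spectral theory applies without caveat) and then pass to $R\to\infty$ by dominated convergence, using Lemma~\ref{le3.6}'s uniform absolute dominant to control the integral over $\mathbb{R}_+^{n+1}$ and the Gaussian decay of $h$ to control the pairing $\int f_R h\,dx \to \int fh\,dx$.
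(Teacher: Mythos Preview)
Your approach is correct and essentially parallel to the paper's. Both arguments rest on shifting $(t\partial_t e^{-t\L})^2$ onto $h=(I-e^{-r_B^2\L})g$ via Fubini/self-adjointness and then invoking the spectral identity $\int_0^\infty t\lambda^2 e^{-2t\lambda}\,dt=\tfrac14$. The paper carries out the latter by citing \cite[Lemma~7]{DGMTZ} and \cite[Lemma~3.7]{MSTZ} as black boxes for passing the limit inside the $x$-integral and computing $\int_0^\infty(t\partial_t e^{-t\L})^2 h\,\tfrac{dt}{t}=\tfrac14 h$; your two routes---integration by parts in $t$ on $[\delta,N]$ with explicit endpoint control, or truncation $f_R=f\chi_{B(0,R)}\in L^2(\RR)$ followed by dominated convergence using Lemma~\ref{le3.6}---are more self-contained realizations of the same step and have the advantage of not depending on outside references.

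One small correction: your claim that $h\in L^\infty(\RR)$ is not quite right, since $g$ is only assumed to lie in $L^2(B)$ and on $B$ one has $h=g-e^{-r_B^2\L}g$ with $g$ merely square-integrable. What \emph{is} true, and suffices for your endpoint estimates and for the weighted pairing $\int f\,h\,dx$ to make sense, is that $h\in L^1(\RR)\cap L^2(\RR)$ and $h$ has Gaussian decay outside $B$ (since there $h=-e^{-r_B^2\L}g$). With this adjustment your argument goes through unchanged.
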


\begin{proof}
The technique of this lemma's proof has been used in lots of papers, for example \cite{DXY, DY2, DGMTZ, MSTZ}, but it is notable to state it at here for completeness.

By Lemma \ref{le3.6}, we know that $\displaystyle\int_{\Real_+^{n+1}}\abs{F(x,t)G(x,t)}{dxdt\over t}<\infty$. By dominated convergence theorem, the following integral converges absolutely and satisfies
\begin{equation*}
I=\int_{{\mathbb R}^{n+1}_+}F(x,t) G(x,t){dxdt\over t}=\lim_{\epsilon\rightarrow 0^+}\int_\epsilon^{1\over \epsilon}\int_{\RR}F(x,t)G(x,t){dxdt\over t}.
\end{equation*}
By Fubini's theorem, together with the commutative property of the semigroup $\{e^{-t\L}\}_{t>0}$, we have
\begin{eqnarray*}
\int_{\RR}F(x,t)G(x,t)dx
&=&\int_{\RR} f(y)\Big(t\partial_te^{-t\L}\Big)^2(\mathcal I-e^{-r^2_B\L})g(y)dy.
\end{eqnarray*}
 Whence,
 \begin{eqnarray*}
I&=&\lim_{\epsilon\rightarrow 0^+}\int_\epsilon^{1\over \epsilon}\int_{\RR} f(x)(t\partial_te^{-t\L})^2(\mathcal I-e^{-r^2_B\L})g(x){dx dt\over t}\\
&=&\lim_{\epsilon\rightarrow 0^+} \int_{\RR}f(x)\int_\epsilon^{1\over \epsilon}(t\partial_te^{-t\L})^2(\mathcal I-e^{-r^2_B\L})g(x){dtdx\over t}.
\end{eqnarray*}
By \cite[Lemma 7]{DGMTZ}, we can pass the limit inside the integral above. And, by a similar computation of \cite[Lemma 3.7]{MSTZ} with $\beta=1$ , we have
 \begin{eqnarray*}
I&=& \int_{\RR}f(x)\int_0^{\infty} (t\partial_te^{-t\L})^2(\mathcal I-e^{-r^2_B\L})g(x){dtdx\over t}={1\over 4}\int_{\RR}f(x)(\mathcal I-e^{-r^2_B\L})g(x)dx.
\end{eqnarray*}
This completes the proof.
\end{proof}

Now, we are in a position to prove  Lemma~\ref{le3.5}
\begin{proof}[Proof of Lemma~\ref{le3.5}]
First, we note an equivalent  characterization of $ {\rm L^{2,\lambda}}(\RR)$ that
   $ f\in{\rm L^{2,\lambda}}(\RR)$ if and only if  $f\in L^2((1+|x|)^{-(n+\epsilon)}dx)$ and
  \begin{equation*} \label{e3.9}
  \sup_B \Big( |B|^{-\frac{\lambda}{n}}\int_{B}|f(x)- e^{-r^2_B{\L}}f(x) |^2dx\Big)^{1/2}\leq C<\infty.
  \end{equation*}
This has been proved in \cite[Proposition 6.11]{DY2}
  (see  also \cite{DY1, HLMMY, Song}).

 Now if $\|u\|_{{\rm TL_\L^\lambda}(\Real_+^{n+1})}<\infty$, then it follows from Lemma \ref{le3.1} that
 $$
 \int_{\Real^{n}} {|f_k(x)|^2\over 1+|x|^{2n}} dx\leq C_k <\infty.
 $$
 Given an $L^2$ function $g$ supported on a ball $B=B(x_B, r_B)$, it follows by  Lemma~\ref{le3.7}  that
 we have
\begin{align*}
  \int_{{\mathbb R}^n}
f_k(x) (I-e^{-r^2_B \L} )g(x)dx
 = {1\over 4}\int_{{\mathbb R}^{n+1}_+}  t \partial_t e^{-t \L}   f_k(x)  \
t \partial_t e^{-t \L}  (\mathcal I-e^{-r^2_B \L} )g(x) {dxdt\over t}.
\end{align*}
By Lemmas~\ref{le3.4} and ~\ref{le3.6},
\begin{align*}
  \abs{\int_{{\mathbb R}^n}
f_k(x) (\mathcal I-e^{-r^2_B\L} )g(x)dx}  &\leq C|B|^{\frac{\lambda}{2n}}
 \interleave \mu_{\nabla_t,  f_k}\interleave_{(2,\lambda)car} \|g\|_{{L}^{2n\over n+2}(B)}\\
 &\leq C|B|^{\frac{\lambda}{2n}}\|u\|_{{\rm TL_\L^\lambda}(\Real_+^{n+1})} \|g\|_{{L}^{2n\over n+2}(B)}.
\end{align*}
  Then the duality argument
for ${L}^2$ shows that
\begin{align*}
\Big(|B|^{-\frac{\lambda}{n}}\int_B |f_k(x)-e^{-r^2_B \L}f_k(x)|^2dx\Big)^{1/2}
&=|B|^{-\frac{\lambda}{2n}}\sup\limits_{\|g\|_{{ L}^{2}(B)}\leq 1}\Big|\int_{{\mathbb R}^n}
(\mathcal I-e^{-r^2_B \L})f_k(x)g(x)dx\Big|\nonumber\\
&\le |B|^{-\frac{\lambda}{2n}}\sup\limits_{\|g\|_{{ L}^{2n\over n+2}(B)}\leq 1}\Big|\int_{{\mathbb R}^n}
f_k(x)\big(\mathcal I-e^{-r^2_B \L}\big)g(x)dx\Big|\nonumber\\
&\leq C
\|u\|_{{\rm TL_\L^\lambda}(\Real_+^{n+1})},
\end{align*}
for some $C>0$   independent of $k.$

 It then follows
 that for all $k\in{\mathbb N}$, $\{f_k\}$ is uniformly bounded in
${{\rm TL_\L^\lambda}(\Real_+^{n+1})}$.
\end{proof}

\begin{proof}[Proof of part (1) of Theorem~\ref{th1.1}]
 Recall that the condition $V\in B_n$   implies $V\in B_{q_0}$ for some $q_0>n/2$.
From
Lemmas~\ref{le2.2} and  \ref{le3.8}, we see  that $u(x,t)=e^{-t{\L}}f(x)\in C^1({\mathbb R}^{n+1}_+)$.
It will be enough to finish the proof if we have proved
\begin{equation}\label{e3.16}
\|u\|_{\rm TL_\L^\lambda(\mathbb{R}^{n+1}_{+})}\leq C\|f\|_{L^{2,\lambda}(\mathbb{R}^{n})}.
\end{equation}
To prove (\ref{e3.16}), by a similar argument in \cite{DGMTZ}, we can easily prove that for every $f\in L^{2,\lambda}(\mathbb{R}^{n})$,
the term $|\partial_{t}e^{-t\mathcal{L}}(f)(x)|^{2}$ has the following estimate(see \cite[Theorem 2]{DGMTZ}):
\begin{equation*}
\sup_{x_B, r_B}
 r_B^{-\lambda}\int_0^{r_B^2}\int_{B(x_B, r_B)} |  \partial_t  e^{-t \L}   f(x)|^2 {dxdt }\leq\|f\|_{L^{2,\lambda}(\mathbb{R}^{n})}.
\end{equation*}
So, we only need to estimate the term $|\nabla_{x}e^{-t\mathcal{L}}(f)(x)|^{2}$. In fact
\begin{eqnarray*}
\left(\frac{1}{r_{B}^{\lambda}}\int_{0}^{r_{B}^{2}}\int_{B}|\nabla_{x} e^{-t\mathcal{L}}f(x)|^{2}dxdt\right)^{\frac{1}{2}}&\leq&
\sum\limits_{k=0}\limits^{\infty}\frac{1}{r_{B}^{\lambda/2}}\left(\int_{0}^{r_{B}^{2}}\int_{B}|\nabla_{x} e^{-t\mathcal{L}}f_{k}(x)|^{2}dxdt\right)^{\frac{1}{2}}\\
&=:&\sum\limits_{k=0}\limits^{\infty}J_{k},
\end{eqnarray*}
where $f_{0}=f\chi_{2B}$ and $f_{k}=f\chi_{2^{k+1}B\backslash2^{k}B}$ for $k\in\mathbb{N}^{+}$. For $J_0$, since the Riesz transform $\nabla \L^{-1/2}$ is bounded on $L^2(\RR)$, by \eqref{eqsquare} and the commutative property of $e^{-\L}$ and $\L^{-{1\over 2}}$,   we have
\begin{align*}
J_0^2=\frac 1{r_B^\lambda} \int_0^{r_B^2}\int_B \abs{\nabla_{x}e^{-t{ \L}}f_0(x) }^2{dxdt}
 &\le \frac 1{r_B^\lambda}\int_0^{r_B^2}\int_{\Real^n} \abs{\nabla_{x}\L^{-{1/2}}\L^{{1/2}}e^{-t {\L}}f_0(x) }^2{dxdt}\\
&\le C \frac 1{r_B^\lambda}\int_0^{\infty}\int_{\RR} \abs{\partial_t e^{-t{ \L}}\left(\L^{-1/2}f_0\right)(x) }^2{dxdt}\\
&\le C\frac 1{r_B^\lambda}\norm{\L^{1/2}\L^{-1/2}f_0}_{L^2(\Real^n)}^2=C\frac 1{r_B^\lambda}\int_{2B}\abs{f(x)}^2dx\\
&\le C\|f\|_{L^{2,\lambda}(\mathbb{R}^{n})}^2.
\end{align*} When $k\ge 1,$
for any $x\in B$ and $k\in\mathbb{N}^{+}$, we apply (\ref{e3.11}) to obtain
\begin{eqnarray*}
|\nabla_{x}e^{-t\mathcal{L}}f_{k}(x)|&\leq& C\int_{2^{k+1}B/2^{k}B}t^{-(n+1)/2}e^{-\frac{|x-y|^{2}}{ct}}|f(y)|dy\\
&\leq& C\int_{2^{k+1}B/2^{k}B}|x-y|^{-(n+1)}|f(y)|dy\\
&\leq& C \frac{1}{(2^{k}r_{B})^{n+1}}\int_{2^{k+1}B}|f(y)|dy\\
&\leq& C \frac{1}{(2^{k}r_{B})^{1+\frac{n-\lambda}{2}}}\|f\|_{L^{2,\lambda}(\mathbb{R}^{n})},
\end{eqnarray*}
which yields
\begin{equation*}
|J_{k}|\leq C2^{-k(1+\frac{n-\lambda}{2})}\|f\|_{L^{2,\lambda}(\mathbb{R}^{n})}.
\end{equation*}
Hence $\sum\limits_{k=0}\limits^{\infty}|J_{k}|\leq C\|f\|_{L^{2,\lambda}(\mathbb{R}^{n})}$, and then $\|u\|_{\rm TL_\L^\lambda(\mathbb{R}^{n+1})}\leq C\|f\|_{L^{2,\lambda}(\mathbb{R}^{n})}$.
\end{proof}

 \begin{proof}[Proof of part (2) of Theorem~\ref{th1.1}]  To prove it, we will use the argument as in \cite{DYZ,FJN,JX} and apply the
 key Lemma \ref{le3.5}. Suppose $u\in \rm TL_\L^\lambda(\mathbb{R}^{n+1}_{+})$, our aim is to find a function $f\in L^{2,\lambda}(\mathbb{R}^{n})$ such that
 \begin{equation*}
 u(x,t)=e^{-t\mathcal{L}}f(x),~~~~~\text{for each}~~~~~(x,t)\in\mathbb{R}^{n+1}_{+}
 \end{equation*}
 To do this, for every $k\in \mathbb{N}^{+}$, we write $f_{k}=u(x,{1/ k}).$ 
  By  Lemma \ref{le3.5}, we obtain
 \begin{equation*}
 \int_{B(0,2^{j})}|f_{k}(x)|^{2}dx\leq C2^{j\lambda}\|u\|_{\rm TL_\L^\lambda(\mathbb{R}^{n+1}_{+})}^{2}.
 \end{equation*}
 This means that the sequence $\{f_{k}\}_{k=1}^{\infty}$ is bounded in $L^{2}(B(0,2^{j}))$. So by passing to a subsequence, the
 sequence $\{f_k\}$ converges weakly to a function $g_{j}\in L^{2}(B(0,2^{j}))$. Then, for $i>j$, we can get $g_i(x)=g_j(x),$ for almost everywhere $x\in B(0, 2^j).$ Next, we define a function $f(x)$ by
 \begin{equation*}
 f(x)=g_{j}(x),\ \ \ \  ~~~~~~~~~~\text{if} ~~~~~~~~x\in B(0,2^{j}), j=1,2,3\cdots.
 \end{equation*}
 It is easy to see that $f$ is well defined on $\mathbb{R}^{n}=\bigcup_{j=1}^{\infty}B(0,2^{j})$ and (after passing to a subsequence) $f_k\rightarrow f$ in $L^2$ on every ball of $\mathbb{R}^n$. It is also easy to check that for any
 open ball $B\subset\mathbb{R}^{n}$, we have
 \begin{equation*}
 \int_{B}|f(x)|^{2}dx\leq Cr_{B}^{\lambda}\|u\|_{\rm TL_\L^\lambda(\mathbb{R}^{n+1}_{+})}^{2},
 \end{equation*}
 which implies
 \begin{equation*}
 \|f\|_{L^{2,\lambda}(\mathbb{R}^{n})}\leq C\|u\|_{\rm TL_\L^\lambda(\mathbb{R}^{n+1}_{+})}.
 \end{equation*}
 Finally, we will show that $u(x,t)=e^{-t\mathcal{L}}f(x)$. Since $u(x,\cdot)$ is continuous on $\mathbb{R}_{+}$, we have
 $u(x,t)=\lim\limits_{k\rightarrow+\infty}u(x,t+\frac{1}{k})$. Then we have
 $u(x,t)=\lim\limits_{k\rightarrow+\infty}e^{-t\mathcal{L}}(u(\cdot,\frac{1}{k}))(x)$. It reduces to show
 \begin{equation*}
 \lim\limits_{k\rightarrow+\infty}e^{-t\mathcal{L}}(u(\cdot,\frac{1}{k}))(x)=e^{-t\mathcal{L}}f(x).
 \end{equation*}
 Indeed, we recall that $\mathcal{H}_{t}(x,t)$ is the kernel of $e^{-t\mathcal{L}}$. Then for any $l\in\mathbb{N}$, we write
 \begin{equation*}
 e^{-t\mathcal{L}}(u(\cdot,\frac{1}{k}))(x)=\int_{B(x,2^{l}t)}\mathcal{H}_{t}(x,y)f_{k}(y)dy+
 \int_{B(x,2^{l}t)^{c}}\mathcal{H}_{t}(x,y)f_{k}(y)dy
 \end{equation*}
 By Lemma \ref{le2.2} and the H\"{o}lder inequality, we have
 \begin{eqnarray*}
 \left|\int_{(B(x,2^{l}t))^{c}}\mathcal{H}_{t}(x,y)f_{k}(y)dy\right|
 &\leq&C\sum\limits_{i=l}\limits^{\infty}\int_{B(x,2^{i+1}t)/B(x,2^{i}t)}t^{-\frac{n}{2}}e^{-\frac{|x-y|^{2}}{ct}}|f_{k}(y)|dy\\
 &\leq&C\sum\limits_{i=l}\limits^{\infty}(2^{i}t)^{-n}
 \int_{B(x,2^{i+1}t)}|f_{k}(y)|dy\\
 &\leq& C\sum\limits_{i=l}\limits^{\infty}(2^{i}t)^{\frac{\lambda-n}{2}}\|f_{k}\|_{L^{2,\lambda}(\mathbb{R}^{n})}\\
 &\leq& C2^{-l\frac{n-\lambda}{2}}t^{\frac{\lambda-n}{2}}\|f_{k}\|_{L^{2,\lambda}(\mathbb{R}^{n})}.
 \end{eqnarray*}
By Lemma \ref{le3.5}, we have that $\|f_{k}\|_{L^{2,\lambda}(\mathbb{R}^{n})}\leq C\|u\|_{\rm TL_\L^\lambda(\mathbb{R}^{n+1}_{+})}$ for some constant $C>0$
 independent of $k$. Since $\lambda\in (0,n)$, we have
 \begin{equation*}
\limsup\limits_{l\rightarrow+\infty}\limsup\limits_{k\rightarrow+\infty}\abs{\int_{B(x,2^{l}t)^c}\mathcal{H}_{t}(x,y)f_{k}(y)dy}\le
 \lim\limits_{l\rightarrow+\infty}\left(C2^{-l(\frac{n-\lambda}{2})}t^{\frac{\lambda-n}{2}}\|u\|_{\rm TL_\L^\lambda(\mathbb{R}_{+}^{n+1})}\right)=0.
 \end{equation*}
 Therefore,
 \begin{equation*}
 \lim\limits_{k\rightarrow+\infty}e^{-t\mathcal{L}}(u(\cdot,k^{-1}))(x)=\lim\limits_{k\rightarrow+\infty}
 \lim\limits_{l\rightarrow+\infty}\int_{B(x,2^{l}t)}\mathcal{H}_{t}(x,y)f_{k}(y)dy=e^{-t\mathcal{L}}f(x).
 \end{equation*}
 We have showed that $u(x,t)=e^{-t\mathcal{L}}f(x)$. The proof of Theorem \ref{th1.1} is completed.
\end{proof}

\bigskip

{\bf Acknowledgments.}  This research was partly supported by the Natural Science Foundation of Zhejiang Province (Grant No. LY18A010006  and No. LQ18A010005) and the National Natural Science Foundation of China (Grant No. 11401525 and No. 11801518).

 The authors would like to thank the referee's professional comments which improved our results in this paper.

 \bigskip



\end{document}